\documentclass[12pt]{amsart}

\usepackage{graphicx}

\usepackage{amsfonts}
\usepackage{amsmath,amssymb,amsthm,amssymb,amscd,cancel,stackrel}
\usepackage{blkarray}
\usepackage{multirow}
\usepackage{memhfixc}
\usepackage{latexsym}
\usepackage{tikz}
\usepackage{enumerate}
\usepackage{esint}

\newcommand{\fm}{\mathfrak m}

\newcommand{\mfa}{\mathfrak{a}}
\newcommand{\mfb}{\mathfrak{b}}
\newcommand{\mfc}{\mathfrak{c}}
\newcommand{\mfh}{\mathfrak{h}}

\newcommand{\ffi}{\varphi}
\newcommand{\al}{\alpha}
\newcommand{\be}{\beta}
\newcommand{\ga}{\gamma}
\newcommand{\la}{\lambda}

\newcommand{\om}{\omega}
\newcommand{\pa}{\partial}

\DeclareMathOperator{\coker}{coker}

\DeclareMathOperator{\Ext}{Ext}
\DeclareMathOperator{\grade}{grade}
\DeclareMathOperator{\Hom}{Hom}
\DeclareMathOperator{\id}{id}
\DeclareMathOperator{\Image}{Im}

\DeclareMathOperator{\pf}{Pf}

\DeclareMathOperator{\rank}{rank}
\DeclareMathOperator{\reg}{reg}

\DeclareMathOperator{\Tor}{Tor}

\newcommand{\sdeg}[1]{\mbox{\small $\deg{#1}$}}


\newtheorem{theorem}{Theorem}[section]
\newtheorem{lemma}[theorem]{Lemma}
\newtheorem{proposition}[theorem]{Proposition}
\newtheorem{corollary}[theorem]{Corollary}

\newtheorem{lem-def}[theorem]{Lemma and Definition}
\newtheorem{prop-def}[theorem]{Proposition and Definition}

\theoremstyle{definition}

\newtheorem{rem-def}[theorem]{Remark and Definition}
\newtheorem{example}[theorem]{Example}


%
%

%
%
\textwidth=16.00cm
\textheight=24.00cm
\topmargin=-1.00cm
\oddsidemargin=0.25cm
\evensidemargin=0.25cm
\headheight=0.3cm
\headsep=0.5cm

\newcount\HOUR
\newcount\MINUTE
\newcount\HOURSINMINUTES
\newcount\INTVAL
\newcommand{\twodigit}[1]{\INTVAL=#1\relax\ifnum\INTVAL<10 0\fi\the\INTVAL}
\HOUR=\time\divide\HOUR by 60\relax
\HOURSINMINUTES=\HOUR\multiply\HOURSINMINUTES by 60\relax
\MINUTE=\time\advance\MINUTE by -\HOURSINMINUTES\relax
\newcommand\rightnow{
            \twodigit{\the\HOUR}:\twodigit{\the\MINUTE},
            \twodigit{\number\day}.\space
            \ifcase\month\or January\or February\or March\or April\or
May\or June\or July\or August\or September\or October\or November\or
December\fi
            \space\number\year}

\title{Constructing Homogeneous Gorenstein Ideals} 

\author{Sema G\"unt\"urk\"un}
\email{g.sema@uky.edu}
\address{Department of Mathematics,
University of Kentucky,
715 Patterson Office Tower, Lexington, KY 40506-0027\\
 USA}
\author{Uwe Nagel}
\email{uwe.nagel@uky.edu}
\address{Department of Mathematics,
University of Kentucky,
715 Patterson Office Tower, Lexington, KY 40506-0027\\
 USA}

\thanks{
    Part of the work for this paper was done while the  second author was  partially supported by the National Security Agency under Grant Number H98230-12-1-0247.}

\keywords{Gorenstein ideal, liaison, free resolution, elementary biliaison}
\subjclass[2010]{13H10, 13C40, 13D02, 14M12}

\begin{document}

\maketitle 

 \begin{abstract}
In 1983 Kustin and Miller introduced a construction of Gorenstein ideals in local Gorenstein rings, starting from smaller such ideals. We review and modify their construction
in the case of graded rings and  discuss it within the framework of  Gorenstein
liaison theory. We determine invariants of the constructed ideal. Concerning the problem of when a given Gorenstein ideal can be obtained by the construction, we derive  a necessary condition and exhibit a Gorenstein ideal that can not be obtained using the construction.
 \end{abstract}

\section{Introduction}
In \cite{KM} Kustin and Miller introduce a construction that produces, for given Gorenstein ideals
$\mfb \subset \mfa$  with grades $g$ and $g-1$, respectively, in a Gorenstein local ring $R$,   a new Gorenstein ideal $I$ of grade $g$ in a larger Gorenstein ring $R[v]$. Here $v$ is a new indeterminate. In \cite{KM2} they give an interpretation for their construction via liaison theory. These beautiful results prompted us to review their construction for homogeneous Gorenstein ideals in a graded Gorenstein ring. Instead of introducing a new indeterminate, we use  a suitable homogeneous element. The construction in \cite{KM} does not quite reveal the  conditions on that homogenous element. Therefore, we reverse the steps. We use two direct Gorenstein links to produce a new  Gorenstein ideal and to describe a generating set of it. Then we adapt the original Kustin-Miller construction suitably in order to produce a graded free resolution of the new Gorenstein ideal that is often minimal. We also consider the question of when the process can be reversed, that is, when can a Gorenstein ideal be obtained using the construction.

This paper is organized as follows. In Section 2 we recall some results from liaison theory and 
the mapping cone procedure.    In Section 3 we present a construction
of homogeneous Gorenstein ideals via liaison theory. Given two homogeneous Gorenstein ideals $\mfb \subset \mfa$ of grades of $g-1$ and $g$ in a graded Gorenstein ring $R$, by 
choosing an appropriate homogeneous element $f$ in $R$ we construct a homogeneous Gorenstein ideal $I = \mfb + (\al^*_{g-1} + (-1)^g f a_g^*)$ in the original ring $R$. Here $\al^*_{g-1}$ and $a_g^*$ are row vectors derived from comparing the resolutions of $\mfa$ and $\mfb$  and the second ideal is generated by the entries of the specified row vector (see Theorem~\ref{app_a_and_b}).

Using liaison theory, one also gets a graded free resolution of $I$. However, this resolution is never minimal. Adapting the original Kustin-Miller construction and its proof we obtain a smaller resolution that is often minimal (see Theorem~\ref{KM construction thm}). The key is a short exact sequence, which also allows us to interpret the linkage construction in Section 2 as an elementary biliaison from $\mfa$ on $\mfb$. Furthermore, we obtain a necessary condition on $\mfa$ for constructing a given Gorenstein ideal $I$ by such a biliaison (see Corollary~\ref{cor:necc cond}).

The original Kustin-Miller construction has been used to produce many interesting classes of Gorenstein ideals. In birational geometry it is known as unprojection (see, e.g., \cite{Pa, PR, BKR}). We illustrate the flexibility of our homogeneous construction by producing examples in Section 5. These include the Artinian Gorenstein ideals with socle degree two as classified by Sally \cite{sally} and the ideals of submaximal minors of a generic square matrix that are resolved by the Gulliksen-Neg\.{a}rd complex. We also consider some Tom unprojections as studied in \cite{BKR}.
We conclude with an example of a homogeneous Gorenstein ideal that can not be obtained using the construction of Theorem~\ref{app_a_and_b} with a strictly ascending biliaison.

\section{Liaison and mapping cones}

We frequently use ideas from liaison theory and mapping cones. We briefly recall some relevant concepts in this section.

Throughout this note $R$  denotes a commutative Noetherian ring that is either local with maximal ideal  $\fm$ or graded. In the latter case we assume that $R = \oplus_{j \ge 0} [R]_j$ is generated as $[R]_0$-algebra by $[R]_1$ and $[R]_0$ is a field. We denote by $\fm$ its maximal homogenous ideal $\oplus_{j \ge 0} [R]_j$. If $R$ is a graded ring, we consider only homogeneous ideals of $R$.

Assume that $R$ is a Gorenstein ring, that is, it has finite injective dimension as an $R$-module.  A {\em Gorenstein ideal} of $R$ is a perfect ideal $\mfc$  such that $R/\mfc$ is Gorenstein. An ideal $I \subset R$ is said to be (directly) \textit{linked} to an ideal $J \subset R$ by a Gorenstein ideal $\mfc \subset R$ if  $\mfc \subset I\cap J$  and $\mfc : I = J$ and $\mfc : J = I$. Symbolically, we write $I \linkby[\mfc] J$. Liaison is the equivalence relation generated by linkage. The equivalence classes are called liaison classes. Since we allow Gorenstein ideals in order to link, this is also referred to as Gorenstein liaison. We  always work in this generality. For a comprehensive introduction  to liaison theory we refer to \cite{Mi}.

It is not difficult to show that all complete intersections of a fixed grade are in the same liaison class. Much more is true.

\begin{theorem}
All Gorenstein ideals of $R$ of  grade $g$ are in the same liaison class.
\end{theorem}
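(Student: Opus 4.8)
The plan is to combine the quoted fact with a reduction to \emph{complete intersections}. Since liaison is an equivalence relation and, by the remark preceding the statement, all complete intersections of grade $g$ lie in a single liaison class $\mathcal{L}$, it suffices to show that an arbitrary Gorenstein ideal $\mfc$ of grade $g$ belongs to $\mathcal{L}$. First I would produce one honest link off of $\mfc$: choose a regular sequence $\mathfrak{x} = (x_1,\dots,x_g) \subseteq \mfc$ of length $g$, so that $\mathfrak{x}$ is a complete intersection of grade $g$ contained in $\mfc$. Because $\mfc$ is Gorenstein, hence Cohen--Macaulay and unmixed, the ideal $\mfb := \mathfrak{x} : \mfc$ is directly linked to $\mfc$ through $\mathfrak{x}$, and double linkage returns $\mathfrak{x} : \mfb = \mfc$. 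Thus $\mfc$ and $\mfb$ lie in the same class, and the theorem reduces to connecting $\mfb$ to a complete intersection.

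The heart of the matter is this last connection. A minimal free resolution of $R/\mfc$ is self-dual with $F_0 = F_g = R$, and a comparison map from the Koszul complex on $\mathfrak{x}$ into it produces, by the mapping-cone procedure recalled in Section~2, an explicit (generally non-minimal) resolution of $R/\mfb$; dualizing exhibits $\om_{R/\mfb}$ and hence controls the relevant numerical invariants of $\mfb$. The plan is then to descend to a complete intersection by a finite sequence of elementary biliaisons, each of which replaces the current ideal by one of strictly smaller invariant (for instance a smaller regularity, or a smaller twist of the canonical module), until the invariant is minimal, at which point the ideal is a complete intersection. Each elementary biliaison is realized by two successive Gorenstein links, so every step keeps us inside $\mathcal{L}$; this is exactly the biliaison machinery developed later in the paper.

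The main obstacle is proving that such a terminating descent exists, i.e.\ that $\mfb$ (equivalently $\mfc$) is \emph{glicci}. One should not expect a single generic link to help: the mapping-cone resolution shows that the number of generators typically \emph{grows} under a direct link, and, by Kunz's theorem, a nontrivial link of a Gorenstein ideal is never again Gorenstein (a Gorenstein almost complete intersection is already a complete intersection). Consequently the chain is forced to pass through non-Gorenstein Cohen--Macaulay ideals, and the required decrease must be engineered rather than found generically: one chooses the biliaison datum as a suitable generalized divisor on an arithmetically Cohen--Macaulay subscheme so that the chosen invariant provably drops. Verifying that this datum exists at each stage, and that the process halts at a complete intersection, is where the genuine Gorenstein-liaison input lies and constitutes the bulk of the argument; it is precisely here that working with Gorenstein (rather than only complete intersection) links is essential, since in complete intersection liaison the analogous statement fails.
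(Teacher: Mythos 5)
Your proposal is an outline rather than a proof: the entire content of the theorem is concentrated in the step you explicitly defer. Reducing to the statement that every Gorenstein ideal of grade $g$ is glicci is immediate (and the first reduction, linking $\mfc$ to $\mfb = \mathfrak{x}:\mfc$ through a complete intersection $\mathfrak{x}\subseteq\mfc$, buys you nothing beyond restating the problem, since $\mfb$ is an almost complete intersection and you must still connect \emph{it} to a complete intersection). The ``terminating descent by elementary biliaisons that strictly decreases regularity'' is exactly what has to be constructed, and you give no argument that at each stage there exists a grade $g-1$ ACM, generically Gorenstein ideal $\mathfrak{j}$ containing the current ideal together with a generalized divisor realizing a strictly descending biliaison, nor that the process terminates at a complete intersection. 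The paper does not prove this either: its ``proof'' is the citation of Casanellas--Drozd--Hartshorne \cite{CDH}, where precisely this descent is carried out for (non-Artinian homogeneous) Gorenstein ideals in a polynomial ring using the theory of generalized divisors on ACM schemes, together with the remark that the arguments carry over. So your sketch correctly identifies where the difficulty lies and the general shape of the known argument, but it does not supply the Gorenstein-liaison input that constitutes the theorem.

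Two smaller points. First, your appeal to Kunz's theorem to argue that the chain ``is forced to pass through non-Gorenstein ideals'' only concerns links by complete intersections; a Gorenstein link of a Gorenstein ideal can perfectly well be Gorenstein again (the construction in Section~3 of the paper produces exactly such links), so this remark is misleading as stated. Second, an elementary biliaison on $\mathfrak{j}$ requires $\mathfrak{j}$ to be Cohen--Macaulay and generically Gorenstein so that the isomorphism $I/\mathfrak{j}(-s)\cong K/\mathfrak{j}$ can be traded for two honest Gorenstein links; verifying this hypothesis for the auxiliary ideals in your descent is part of the work you have omitted.
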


This has been shown in \cite{CDH} for non-Artinian homogeneous Gorenstein ideals in a polynomial ring. However, the arguments work in this generality.

From now on we focus on graded rings as the results hold analogously for local rings if one forgets the grading.

Let $R$ be a graded Gorenstein ring, and let $M$ be a graded $R$-module. For any integer $s$, the module $M(s)$ is the module $M$ with the shifted grading given by $[M(s)]_j = [M]_{s+j}$.
 The $R$-dual of $M$ is the graded module $M^* = \Hom_R (M, R)$. The dual of $M$ with respect to the field $[R]_0$ is denoted by $M^{\vee}$. It is also a graded $R$-module.

The {\em canonical module} of $M$, denoted by $\omega_M$, is the $[R]_0$-dual of the local cohomology module $H^{\dim M}_{\fm} (M)$. By local duality, there is a graded isomorphism
$\omega_M \cong \Ext^g_R (M, R)(s)$, where $g = \dim R - \dim M$ and $s$ is the integer such that $\omega_R \cong R(s)$.

Now we recall a very useful short exact sequence. If the ideals $I$ and $J$ are linked by a Gorenstein ideal $\mfc$, there is a short exact sequence
\begin{equation}
     \label{eq:st-exact-seq}
0 \rightarrow \mfc \hookrightarrow I \rightarrow \om_{R/J}(-s) \rightarrow  0,
\end{equation}
where $s$ is the integer such that $\omega_{R/\mfc} \cong R/\mfc (s)$ (see, e.g., \cite[Lemma 3.5]{Na}).

For example, this sequence implies that $R/J$ is Cohen-Macaulay if $R/I$ has this property, and that the mapping cone procedure can be used to derive a free resolution of $\om_{R/J}$ from the resolutions of $I$ and $\mfc$, and thus of $J$ by dualizing  (see \cite{PS}). Because of its importance we recall the mapping cone procedure (see, e.g., \cite{W}).

\begin{lemma}
      \label{mapping cone}
Let
$$\begin{CD} 0 @>>> M @>\al>> N @>>> K @>>> 0 \end{CD}$$
be a short exact sequence of graded $R$-modules, and let
$$\minCDarrowwidth20pt\begin{CD}
\mathbb{F} : \  0 @>>>  F_n @>d^M_n>> F_{n-1}  @>>>    ... @>>>        F_i    @>d^M_i>>  ... @>>>     F_1   @>d^M_1>>    F_0  @>>> M @>>> 0
\end{CD}$$ and
$$\minCDarrowwidth20pt\begin{CD}
\mathbb{G} : \ 0 @>>>  G_n @>d^N_n>> G_{n-1}  @>>>    ... @>>>        G_i    @>d^N_i>>  ... @>>>     G_1   @>d^N_1>>    G_0  @>>> N @>>> 0
\end{CD}$$
be graded free resolutions of $M$ and $N$, respectively. Then  $\al$ induces a  comparison map  $\ffi : \mathbb{F} \to \mathbb{G}$. Its {\em mapping cone} is the following graded free resolution of $K$:
$$\minCDarrowwidth20pt\begin{CD}
  0 @>>>  F_n @>\pa_g>>   G_{n} \oplus F_{n-1} @>>>    ... @>>>    G_{i} \oplus  F_{i-1}  @>\pa_i>>  ... \hspace*{1cm}
\end{CD}$$
$$\minCDarrowwidth20pt\begin{CD}
\hspace*{5cm}  @>>>  G_1 \oplus  F_0    @>\pa_1>>    G_0  @>>> K @>>> 0,
\end{CD}$$
where  $\pa_i = \begin{bmatrix} d^N_{i} & \ffi_{i-1} \\[.5ex]  0 & -d^M_{i-1} \end{bmatrix}$ for $i = 1,2,...,n$.
\end{lemma}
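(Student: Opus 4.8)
The plan is to carry out the classical three-step proof of the mapping cone lemma, keeping track of the grading throughout. \emph{First}, I would produce the comparison map $\ffi$. Because $\mathbb{F}$ consists of free (hence projective) modules and $\mathbb{G}$ is an acyclic resolution of $N$, the composite of the augmentation $F_0 \to M$ with $\al$ lifts through the surjection $G_0 \to N$ to a homogeneous map $\ffi_0 : F_0 \to G_0$, chosen by sending a homogeneous basis of $F_0$ to homogeneous preimages. One then builds $\ffi_i : F_i \to G_i$ by induction (the base case $i=1$ using the commuting augmentation square): for $i \ge 2$, since $d^N_{i-1}(\ffi_{i-1} d^M_i) = \ffi_{i-2} d^M_{i-1} d^M_i = 0$, the map $\ffi_{i-1} d^M_i$ takes values in $\Ker d^N_{i-1} = \Image d^N_i$, so freeness of $F_i$ yields a homogeneous lift $\ffi_i$ with $d^N_i \ffi_i = \ffi_{i-1} d^M_i$. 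This is just the comparison theorem for projective resolutions, with homogeneity guaranteed at each step.

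\emph{Second}, I would check that the proposed cone $(C, \pa)$ is a complex. Multiplying the block matrices gives
$$\pa_i \pa_{i+1} = \begin{bmatrix} d^N_i d^N_{i+1} & d^N_i \ffi_i - \ffi_{i-1} d^M_i \\[.5ex] 0 & d^M_{i-1} d^M_i \end{bmatrix}.$$
The two diagonal entries vanish because $\mathbb{F}$ and $\mathbb{G}$ are complexes, and the off-diagonal entry vanishes by the chain-map relation established in Step~1. This is a direct computation.

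\emph{Third} --- the heart of the matter --- I would prove acyclicity using the long exact homology sequence of the cone. The inclusion $g \mapsto (g,0)$ and projection $(g,f) \mapsto f$ give a short exact sequence of complexes $0 \to \mathbb{G} \to C \to \mathbb{F}' \to 0$, where $\mathbb{F}'$ is the shift of $\mathbb{F}$ with $F_{i-1}$ in homological degree $i$, and whose connecting map is, up to sign, $\ffi_*$. This produces
$$\cdots \to H_i(\mathbb{F}) \xrightarrow{\ffi_*} H_i(\mathbb{G}) \to H_i(C) \to H_{i-1}(\mathbb{F}) \xrightarrow{\ffi_*} H_{i-1}(\mathbb{G}) \to \cdots.$$
Since $\mathbb{F}$ and $\mathbb{G}$ resolve $M$ and $N$, all higher homology vanishes, $H_0(\mathbb{F}) = M$, $H_0(\mathbb{G}) = N$, and $\ffi_*$ in degree $0$ is $\al$. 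For $i \ge 2$ both neighbours of $H_i(C)$ vanish, so $H_i(C) = 0$; for $i = 1$ the sequence gives $0 \to H_1(C) \to M \xrightarrow{\al} N$, whence $H_1(C) = \Ker \al = 0$; and for $i = 0$ it gives $M \xrightarrow{\al} N \to H_0(C) \to 0$, whence $H_0(C) = \coker \al = K$. Thus $C$ is a free resolution of $K$, and it is graded since all the maps above are homogeneous.

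The main obstacle is the acyclicity step, and specifically the use of the injectivity of $\al$: this is precisely what forces $H_1(C) = 0$, so the hypothesis that we begin from a short exact sequence (not merely a map $M \to N$) is essential and cannot be dropped. By contrast the grading is only a bookkeeping issue --- each lift can be taken homogeneous, and the degree shifts recorded in the bases of the $F_i$ and $G_i$ are simply inherited by $C$.
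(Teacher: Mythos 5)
Your proof is correct and is the standard mapping-cone argument (comparison map by projective lifting, block-matrix check that the cone is a complex, then the long exact homology sequence of $0 \to \mathbb{G} \to C \to \mathbb{F}[-1] \to 0$ together with injectivity of $\al$ to get acyclicity and $H_0(C) \cong \coker \al = K$). The paper gives no proof of this lemma and simply defers to the reference [W]; your argument is exactly the one found there, with the grading handled correctly as pure bookkeeping.
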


An analysis of the mapping cone procedure implies the following result by  Buchsbaum-Eisenbud \cite{BE}  and Peskine-Szpiro  \cite{PS}.

\begin{lemma}\label{linkage of Gorenstein}
Let $\mfc$ be a Gorenstein ideal of $R$.Then
  \begin{itemize}
    \item[(a)] If $R/I$ is Gorenstein and $\mfc \subsetneqq I$ with $\grade(\mfc) = \grade(I)$, then  $J = \mfc : I$ is perfect with at most one more
	  minimal generator than $\mfc$.
    \item[(b)] Let $J \subset R$ be a perfect ideal such that $\mfc \subsetneqq J$, $\grade(\mfc) = \grade(J)$, and all minimal generators of $\mfc$ are also  minimal
          generators of $J$. If $J$ has one more minimal generator than $\mfc$, then  $I = \mfc : J$ is a Gorenstein ideal.
   \end{itemize}
\end{lemma}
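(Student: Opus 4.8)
The plan is to prove both parts by analyzing the mapping cone attached to the short exact sequence~(\ref{eq:st-exact-seq}) and then dualizing, exactly in the spirit of Lemma~\ref{mapping cone}. Throughout, fix the minimal graded free resolution
$$\mathbb{F}: \quad 0 \to F_g \to \cdots \to F_1 \to R \to R/\mfc \to 0$$
of $R/\mfc$. Since $\mfc$ is a Gorenstein ideal of grade $g$, this resolution is self-dual up to a twist; in particular $F_g \cong R(a)$ has rank one and $\rank F_{g-i} = \rank F_i$ for all $i$, so that $\rank F_{g-1} = \rank F_1 = \mu(\mfc)$, where $\mu(-)$ denotes the minimal number of generators. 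In both parts the two ideals involved are linked by $\mfc$ (a standard consequence of $\mfc$ being Gorenstein and the other ideal being perfect of the same grade), so that (\ref{eq:st-exact-seq}) is available.

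For part (a), let $\mathbb{G}: 0 \to G_g \to \cdots \to G_1 \to R \to R/I \to 0$ be the minimal resolution of $R/I$; as $R/I$ is Gorenstein, $G_g$ has rank one. The inclusion $\mfc \subseteq I$ induces a comparison map $\ffi: \mathbb{F} \to \mathbb{G}$, and I would apply Lemma~\ref{mapping cone} to (\ref{eq:st-exact-seq}), written as $0 \to \mfc \to I \to \om_{R/J}(-s) \to 0$, to obtain the free resolution
$$0 \to F_g \to G_g \oplus F_{g-1} \to \cdots \to G_2 \oplus F_1 \to G_1 \to \om_{R/J}(-s) \to 0$$
of $\om_{R/J}(-s)$. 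Dualizing with $\Hom_R(-,R)$ and using that $R/J$ is Cohen--Macaulay of codimension $g$ (by (\ref{eq:st-exact-seq}), since $R/I$ is) with $\Ext^g_R(\om_{R/J},R) \cong R/J$ up to a twist, this yields a (possibly non-minimal) free resolution
$$0 \to G_1^* \to (G_2 \oplus F_1)^* \to \cdots \to (G_g \oplus F_{g-1})^* \to F_g^* \to R/J \to 0$$
of $R/J$ (up to a twist), of length $g$. Since $\grade J = g$, this shows $\pd_R R/J = g$, i.e.\ $J$ is perfect. Reading the rank of the term in homological degree one gives $\mu(J) = \beta_1(R/J) \le \rank(G_g \oplus F_{g-1}) = 1 + \rank F_{g-1} = 1 + \mu(\mfc)$, which is the asserted bound.

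For part (b), let now $\mathbb{G}$ be the minimal resolution of the perfect ring $R/J$, and again let $\ffi:\mathbb{F}\to\mathbb{G}$ be a comparison map lifting the surjection $R/\mfc \to R/J$. The same two steps --- forming the mapping cone for $0 \to \mfc \to J \to \om_{R/I}(-s) \to 0$ and dualizing --- produce a free resolution of $R/I$ whose top term is $G_1^*$, of rank $\mu(J) = \mu(\mfc)+1$. To finish I must show this top Betti number collapses to one, so that $R/I$ has Cohen--Macaulay type one and is therefore Gorenstein (it is Cohen--Macaulay because linkage preserves this property). Here the hypothesis enters: since every minimal generator of $\mfc$ is also a minimal generator of $J$, the first comparison map $\ffi_1: F_1 \to G_1$ may be chosen to be a split injection onto a free direct summand, so its dual $\ffi_1^*: G_1^* \to F_1^*$ is a split surjection with a unit (degree-zero invertible) block. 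This unit block is exactly the component of the last differential of the dualized complex that allows one to cancel the free summand $F_1^*$ against a rank-$\mu(\mfc)$ summand of $G_1^*$. After this cancellation the top term has rank $(\mu(\mfc)+1) - \mu(\mfc) = 1$, and since the type is always at least one, it equals one.

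The routine portions --- the self-duality of $\mathbb{F}$, the exactness of the two mapping cones, and the identification $\Ext^g_R(\om_{R/J},R) \cong R/J$ --- I would treat quickly. The main obstacle is the last step of part (b): one must argue carefully that the hypothesis ``all minimal generators of $\mfc$ are minimal generators of $J$'' really makes $\ffi_1$ a split injection compatible with the chosen bases, and that the corresponding Gaussian elimination removes precisely $\mu(\mfc)$ free summands from the top of the resolution of $R/I$, leaving a single copy of $R$. This is the crux of the Buchsbaum--Eisenbud/Peskine--Szpiro mechanism and the place where the ``one extra generator'' assumption is indispensable.
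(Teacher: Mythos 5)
Your proposal is correct, and it is essentially the argument the paper points to: the paper offers no proof of this lemma, only the remark that it follows from ``an analysis of the mapping cone procedure'' due to Buchsbaum--Eisenbud and Peskine--Szpiro, and your mapping-cone-plus-dualize argument (with the self-duality of the resolution of $R/\mfc$ for part (a), and the splitting of $\ffi_1$ and cancellation of the unit block for part (b)) is precisely that analysis, correctly executed.
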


In Case (b), if $\mfc$ is a complete intersection, then $J$ is an almost complete intersection, that is, $I$ has $g + 1$ minimal generators, where $g = \grade I$.


\section{Gorenstein Ideals obtained by two links}
        \label{const.via links}

In this section we use liaison to produce a homogeneous Gorenstein ideal starting from two given homogeneous Gorenstein ideals. This also allows us to relate the Hilbert functions of the involved ideals.

Let $R$ be a graded Gorenstein ring.
Let $\mfa$ and $\mfb \subset \mfa$ be homogeneous Gorenstein ideals in $R$ of grade $g$ and $g-1$, respectively.
Let
\begin{eqnarray*}
\begin{CD}
  \mathbb{A}  :  0 @>>> A_{g}=R(-v)  @>a_{g}>>   A_{g-1}        @>a_{g-1}>>  .... @>>>  A_{1}  @>a_{1}>>   R  @>>> 0
 \end{CD}
\end{eqnarray*}
and
\begin{eqnarray*}
\begin{CD}
  \mathbb{B}  :      0   @>>>        B_{g-1}= R(-u)  @>b_{g-1}>> .... @>>>  B_{1}  @>b_{1}>>   R  @>>> 0
 \end{CD}
\end{eqnarray*}
be graded minimal free resolutions of $R/\mfa$ and $R/\mfb$ respectively. The embedding $\mfb \hookrightarrow \mfa$ induces the following commutative diagram:
\begin{align} \label{comp_BtoA}
\minCDarrowwidth23pt \begin{CD}
  @.              0   @>>>        B_{g-1}= R(-u)  @>b_{g-1}>> .... @>>>  B_{1}  @>b_{1}>>   R  @>>> 0 \\
  @.      @ VVV                     @VV\alpha_{g-1}V            @.         @VV\alpha_{1}V     @VV\alpha_{0}= \id V\\
  0 @>>> A_{g}=R(-v)  @>a_{g}>>   A_{g-1}        @>a_{g-1}>>  .... @>>>  A_{1}  @>a_{1}>>   R  @>>> 0
 \end{CD}
\end{align}
Fixing bases for all the free modules, we identify the maps with their coordinate matrices.
Using these assumptions and notation, the main result of this section is:

\begin{theorem}
      \label{app_a_and_b}
Assume $d = u-v \geq 0$. Let $y \in \mfa$ be a homogeneous element such that $\mfb : y = \mfb$. The embedding $\mu: (\mfb, y) \hookrightarrow \mfa$ induces  an $R$-module homomorphism $\om_{R/\mfa} \to \om_{R/(\mfb, y)}$ that is multiplication by some homogeneous element $\om \in R$. Its degree is $d + \deg y$.

Assume there is a homogeneous element $f \in R$ of degree $d$ such that $\mfb : (\om+fy) = \mfb$. Consider  the ideal $I$ obtained from $\mfa$ by the two links
\[
\mfa \linkby[(\mfb, y)] J \linkby[(\mfb,\om+fy)] I,
\]
that is, $I = (\mfb,\om+fy) : [(\mfb, y) : \mfa]$. Then $I$
is a Gorenstein ideal with the same grade as $\mfa$. It can be written as
\[
I = \mfb + (\al^*_{g-1} + (-1)^g f a_g^*) =  (\mfb, \al^*_{g-1} + (-1)^g f a_g^*),
\]
where $\al^*_{g-1}$ and $a_g^*$ are interpreted as row vectors and
$``+``$ indicates their component-wise sum whose entries, together with generators of $\mfb$, generate $I$.
\end{theorem}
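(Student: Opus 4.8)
The plan is to prove the three assertions in turn: the existence and degree of the multiplier $\om$, the Gorenstein property of $I$ via two applications of Lemma~\ref{linkage of Gorenstein}, and finally the explicit generating set via the mapping cone procedure of Lemma~\ref{mapping cone} applied twice and dualized, exploiting the self-duality of the minimal resolutions $\mathbb{A}$ and $\mathbb{B}$. For the multiplier, I would start from the surjection $R/(\mfb,y)\twoheadrightarrow R/\mfa$ coming from $(\mfb,y)\subseteq\mfa$; applying $\Ext^g_R(-,R)$ and twisting by $\om_R$ yields $\om_{R/\mfa}\to\om_{R/(\mfb,y)}$. Both are canonical modules of Gorenstein quotients, hence cyclic, so this degree-zero map is multiplication by a homogeneous $\om$ (well defined modulo $(\mfb,y)$). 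Its degree equals the difference of the generating degrees of the two canonical modules, which by local duality are the top twists of the minimal resolutions: the top of $\mathbb{A}$ is $R(-v)$, while $R/(\mfb,y)$ is resolved by $\mathbb{B}\otimes K(y)$ (the tensor with the Koszul complex on $y$) with top twist $R(-u-\deg y)$, giving $\deg\om=(u-v)+\deg y=d+\deg y$. The hypothesis $d\ge 0$ is what makes a form $f$ of degree $d$ available.

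Next the Gorenstein property. Since $\mfb:y=\mfb$ and $\mfb:(\om+fy)=\mfb$, the elements $y$ and $\om+fy$ are nonzerodivisors on the Gorenstein ring $R/\mfb$, so $(\mfb,y)$ and $(\mfb,\om+fy)$ are Gorenstein ideals of grade $g$. For the first link, $(\mfb,y)\subsetneq\mfa$ has the same grade and $\mfa$ is Gorenstein, so Lemma~\ref{linkage of Gorenstein}(a) gives that $J=(\mfb,y):\mfa$ is perfect with at most one more minimal generator than $(\mfb,y)$. To pin down that generator I would resolve $\mfa/(\mfb,y)\cong\om_{R/J}(-s)$ (the sequence \eqref{eq:st-exact-seq}) by forming the mapping cone of the comparison map $\mathbb{B}\otimes K(y)\to\mathbb{A}$ lifting $(\mfb,y)\hookrightarrow\mfa$, whose top component realizes multiplication by $\om$, and then dualize. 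Reading off the first differential of the resulting resolution of $R/J$, and using that the entries of $b_{g-1}^*$ generate $\mfb$ by self-duality of $\mathbb{B}$, I expect to obtain $J=(\mfb,y,\om)$, precisely one generator beyond $(\mfb,y)$.

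Rewriting $J=(\mfb,y,\om)=(\mfb,y,\om+fy)$ then sets up the second link: $(\mfb,\om+fy)\subsetneq J$ of the same grade, its minimal generators sit among those of $J$, and $J$ has exactly one further minimal generator $y$, so Lemma~\ref{linkage of Gorenstein}(b) shows $I=(\mfb,\om+fy):J$ is Gorenstein of grade $g$. For the generating set I would run the mapping-cone-and-dualize procedure once more, now for $(\mfb,\om+fy)\hookrightarrow J$ using the resolution of $J$ built above. Because that resolution is assembled from $\mathbb{A}$, $\mathbb{B}$ and the comparison maps $\alpha_\bullet$ of \eqref{comp_BtoA}, dualizing reintroduces the top differential $a_g$ of $\mathbb{A}$ and the map $\alpha_{g-1}$; the self-duality of $\mathbb{A}$ (entries of $a_g^*$ generate $\mfa$) together with the linking element $\om+fy$ and the signs in the mapping-cone differentials should collapse the first differential to the row vector $\al^*_{g-1}+(-1)^g f a_g^*$, yielding $I=\mfb+(\al^*_{g-1}+(-1)^g f a_g^*)$. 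A degree check confirms that $\al^*_{g-1}$ and $f a_g^*$ occupy matching graded components slot by slot, so their componentwise sum is homogeneous.

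The main obstacle is this last step: executing the second mapping cone with the correct twists and signs and simplifying via self-duality to the stated closed form, while at the same time verifying that the presentations of $J$ and $(\mfb,\om+fy)$ feeding Lemma~\ref{linkage of Gorenstein}(b) are genuinely minimal. Tracking the sign $(-1)^g$ and the precise role of $f$ is the delicate point, especially the phenomenon that $f$ enters only through the second linking element $\om+fy$ yet survives in the final formula while $y$ and $\om$ themselves disappear.
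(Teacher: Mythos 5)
Your plan follows the paper's proof essentially step for step: the multiplier $\om$ is read off the top of the comparison map from the Koszul-extended resolution of $(\mfb,y)$ to $\mathbb{A}$, the residual $J=(\mfb,y,\om)$ comes from dualizing the first mapping cone, and the generating set of $I$ comes from dualizing the second mapping cone and splitting off the unit entries, using the self-duality of $\mathbb{A}$ and $\mathbb{B}$. The one place you diverge is in how the Gorenstein property of $I$ is certified: you route it through Lemma~\ref{linkage of Gorenstein}(b), whose hypotheses require that the minimal generators of $(\mfb,\om+fy)$ be among the minimal generators of $J$ and that $J$ have exactly one more minimal generator than $(\mfb,\om+fy)$. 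The paper explicitly declines to claim that $(\mfb,y,\om)$ is a minimal generating set of $J$, and these hypotheses are not automatic (they can fail in degenerate situations, e.g.\ when $\om+fy$ is not a minimal generator of $J$), so this detour would need a separate verification that you yourself flag as an obstacle. The paper avoids the issue entirely: after splitting the invertible blocks in the dualized second mapping cone, the presentation of a shift of $\om_{R/I}$ collapses to the single row $\bar l_1=\begin{bmatrix}\al^*_{g-1}+(-1)^g f a_g^* & b^*_{g-1}\end{bmatrix}$, so the canonical module of $R/I$ is cyclic, whence $I$ is Gorenstein and has the stated generators in one stroke. Since your own final mapping-cone step produces exactly this collapsed presentation, you should extract the Gorenstein property from it rather than from Lemma~\ref{linkage of Gorenstein}(b); with that adjustment your plan is sound and coincides with the paper's argument.
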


\begin{proof} As in the proof of the Lemma \ref{linkage of Gorenstein}, we use the mapping cone procedure repeatedly.
Multiplication by $y$ induces a short exact sequence
\[
0 \to R/\mfb (- e) \to R/\mfb \to R/(\mfb, y) \to 0,
\]
where $e = \deg y$.
Thus, we  obtain a minimal graded free resolution $\mathbb{B'}$ of  $(\mfb, y)$:
\begin{align*}
\small \minCDarrowwidth10pt \begin{CD}
\mathbb{B'} \ : 0 @>>> R(\mbox{\small $-u-e$}) @>d_{g}>> \begin{matrix}R(-u) \\ \oplus \\ B_{g-2}(-e) \end{matrix} @>>> ... @>>> \begin{matrix}B_1\\ \oplus \\ R(-e) \end{matrix} @>d_1>> (\mfb,y)  @>>> 0,
\end{CD}
\end{align*}
where
\[
d_1 = \begin{bmatrix} b_1& y \end{bmatrix},  \; d_g = \small \begin{bmatrix} (-1)^{g-1}y \\ b_{g-1}\end{bmatrix},  \; \text{ and }
d_i = \small \begin{bmatrix} b_i & (-1)^{i-1}yI_{m_{i-1}}\\ 0 & b_{i-1} \end{bmatrix} \text{ if } 2 \le i < g.
\]
Here $I_{m_i}$ denotes the identity matrix with $m_i = \rank B_i$ rows.

Using this resolution, the embedding  $\mu : (\mfb, y)  \hookrightarrow \mfa$ induces the following commutative diagram
\begin{align}
         \label{first comp map}
\minCDarrowwidth10pt\begin{CD}
  0 @>>>  R(\mbox{\small $-u-e$}) @>d_g>> \begin{matrix}R(-u) \\ \oplus \\ B_{g-2}(-e) \end{matrix}   @>>>    ... @>>> \begin{matrix}B_i \\ \oplus \\ B_{i-1}(-e) \end{matrix}  @>d_i>>  ... @>>> \begin{matrix}B_1\\ \oplus \\ R(-e) \end{matrix} @>d_1>> (\mfb, y)  @>>> 0\\
  @.      @V\mu_gVV  @V\mu_{g-1}VV       @.      @V\mu_{i}VV    @.     @V\mu_1VV  @V{\mu}VV \\
  0 @>>>  R(-v) @>a_{g}>>   A_{g-1}  @>>>  ...@>>>   A_{i} @>a_{i}>>        ... @>>>  A_{1}@>a_{1}>> \mfa @>>> 0, \\
\end{CD}
\end{align}
where the maps have the form
\[
\mu_i = \begin{bmatrix} \al_{i} &  r_i \end{bmatrix}.
\]
It shows that the map $\mu_g$ is multiplication by an element of degree $d+e$. By assumption, we may assume that $\om$ is this element.

The mapping cone $C(\mu)$ of $\mu : \mathbb{B'} \to \mathbb{A}$ is
\begin{eqnarray*}
\small \minCDarrowwidth10pt \begin{CD}
0 @>>> R(\mbox{\small $-u-e$}) @>\pa_g>> \begin{matrix}R(-v)\\ \oplus \\ R(-u) \\ \oplus \\ B_{g-2}(-e) \end{matrix} @>>> ...
@>>> \begin{matrix}A_{i+1} \\ \oplus \\B_i \\ \oplus \\ B_{i-1}(-e) \end{matrix}  @>\pa_i>>  ...@>>> \begin{matrix}A_2 \\ \oplus \\B_1\\ \oplus \\ R(-e) \end{matrix} @>\pa_1>> A_1,
\end{CD}
\end{eqnarray*}
where the maps are
$$\pa_1 = \begin{bmatrix}a_2 & \mu_1\end{bmatrix} = \begin{bmatrix}a_2 & \al_1 & r_1\end{bmatrix},\, \, \,
\pa_g = \begin{bmatrix}\mu_g \\ -d_g \end{bmatrix} = \begin{bmatrix}r_g \\ (-1)^gy \\ -b_{g-1} \end{bmatrix},$$
$$\mbox{and} \, \, \, \pa_i = \begin{bmatrix}a_{i+1} & \mu_i\\0&-d_i\end{bmatrix} =
\begin{bmatrix}a_{i+1} & \al_i & r_i \\ 0 & -b_i & (-1)^{i}yI_{m_{i-1}} \\ 0 & 0 & -b_{i-1} \end{bmatrix} \text{ if } 2 \le i < g.
$$
The Sequence \eqref{eq:st-exact-seq} shows that $C(\mu)$ gives a free resolution of a shift of the canonical module of $R/J$. Hence, the dualized and shifted complex $C(\mu)^*(-u-e)$ provides a graded free resolution of the ideal $J =(\mfb, y, \om)$. We do not claim that the stated generating set of $J$ is minimal.


By assumption, there is a homogeneous element $f\in R$ of degree $u-v = d \ge 0$ such that $z := \om+fy$ is regular in $R/\mfb$. Hence, $(\mfb, z)$ is a Gorenstein ideal of grade $g$ in $J$.
Consider now the second link
$$
J \linkby[(\mfb, \om+fy)] I.
$$
As in the case of the ideal $(\mfb, y)$, a mapping cone gives a free resolution of $(\mfb, z)$. Thus, the embedding $\xi: (\mfb, z) \hookrightarrow J$ induces the following commutative diagram:
\begin{eqnarray*}
\minCDarrowwidth10pt\begin{CD}
  0 @>>>  R(\mbox{\small $-u-e-d$}) @>t_g>> \mbox{\small $\begin{matrix}R(-u) \\ \oplus \\ B_{g-2}(-e-d) \end{matrix}$ }  @>>>   \ldots @>>> \mbox{\small $\begin{matrix}B_1\\ \oplus \\ R(-e-d) \end{matrix}$} @>t_1>> (\mfb, z) @>>> 0\\
  @.      @V\xi_gVV  @V\xi_{g-1}VV       @.      @V\xi_{1}VV         @V{\xi}VV   \\
  0 @>>> A_{1}^*(-u-e)@>\pa^*_1>>\mbox{\small $\begin{matrix}A_{2}^*(-u-e) \\ \oplus \\B_1^*(-u-e) \\ \oplus \\ R (-u) \end{matrix}$} @>>> \ldots @>>>
\mbox{ \small $\begin{matrix} A^*_g(-u-e)\\ \oplus \\ B^*_{g-1}(-u-e) \\ \oplus \\ B^*_{g-2}(-u-e)\end{matrix}$} @>\pa^*_g>> J @>>> 0
\end{CD}
\end{eqnarray*}
where the maps are
$$t_1 = \begin{bmatrix} b_1 & z\end{bmatrix}, \; t_g = \begin{bmatrix} (-1)^{g-1}z \\ b_{g-1}\end{bmatrix}, \; \text{ and }
t_i = \begin{bmatrix} b_i & (-1)^{i-1} z I_{m_{i-1}} \\ 0 & b_{i-1}\end{bmatrix} \text{ if }  2 \le i < g.
$$
Since $J = (\mfb, y, \om + f y)$, we can choose the following coordinate matrix for  $\xi_1$:
\[
\xi_1 =
 \begin{blockarray}{cccccc}
    & 1 & \dots  & \dots  & m_{1} &  m_1+1 \\
    \begin{block}{c[cccc@{\hspace*{4pt}}|c@{\hspace*{2pt}}]}
    1 & \BAmulticolumn{4}{c|}{\multirow{1}{*}{$0$}}       & 1\\
    2 & \BAmulticolumn{4}{c|}{\multirow{1}{*}{$0$}}       &(-1)^gf\\
    \cline{2-6}
    3 & \BAmulticolumn{4}{c|}{\multirow{4}{*}{\mbox{\huge $\ga_1$}}}& 0\\
    \vdots&  &&& & \vdots \\
    \vdots&  &&& & \vdots \\
    m_{1}+2 & &&& & 0 \\
    \end{block}
  \end{blockarray}
\]
where the matrix $\ga_1$ is  invertible.

By Sequence \eqref{eq:st-exact-seq}, the mapping cone $C(\xi)$  gives a free resolution of (a shift of)  the canonical module $R/I$. Using the self-duality of the free resolutions $\mathbb{A}$ and $\mathbb{B}$, $C(\xi)$ can be re-written as
\begin{eqnarray}\label{lastMC}
\minCDarrowwidth10pt \begin{CD}
0 @>>> B_{g-1}(-\sdeg{z}) @>l_g>> \small \begin{matrix}R(-u)\\ \oplus \\ B_{g-2}(-\sdeg{z}) \\ \oplus \\ A_{g-1}(-\sdeg{z}) \end{matrix} @>>> ... @>>>
\begin{matrix}B_1 \\ \oplus \\R(-\sdeg{z}) \\ \oplus \\ A_1(-\sdeg{z}) \\ \oplus \\ B_1(-\sdeg{y}) \\ \oplus \\ B_2 \end{matrix} @>l_1>>
\begin{matrix} R(-\sdeg{z}) \\ \oplus \\ R(-\sdeg{y}) \\ \oplus \\ B_1\end{matrix},  
\end{CD}
\end{eqnarray}
where
\[
l_1 = \begin{bmatrix}\pa^*_g & \xi_1\end{bmatrix}.
\]
Since the matrix $\gamma_1$ and the upper right entry of $\xi_1$ are invertible, the cokernel of $l_1$ is isomorphic to $\coker \bar{l}_1$, where
\[
\bar{l}_1 = \begin{bmatrix} \al^*_{g-1}+(-1)^gfa^*_g & b^*_{g-1}\end{bmatrix}: A_1 (- \deg z) \oplus B_1 (- \deg y) \oplus B_2 \to R (- \deg y).
\]
It follows that the canonical module of $R/I$ has only one minimal generator. Hence, $I$ is a Gorenstein ideal and $\coker \bar{l}_1 \cong (R/I) (-\deg y)$. The latter implies the claimed description of a generating set of the ideal $I$.
\end{proof}

Notice that a sufficiently general choice of the element $f$ always gives a desired element $\om + f y$ in Theorem~\ref{app_a_and_b}, at least if the field $k = R/\fm$ is infinite.

We illustrate the result by a simple example.

\begin{example}
    \label{ex:two-small-ci}
Consider the complete intersections $\mfa = (x, y, z)$ and $\mfb = (x^2 - z^2, y^2 - z^2)$ in the polynomial ring $K[x, y, z]$, where $K$ is a field of characteristic zero.     Linking $\mfa$ by $\mfb + (z^2)$, we get as residual $J = \mfb + (z^2, x y z)$. Choosing $f = 5 z$, we link $J$ by $\mfb + (x y z + f z^2)$ to
\[
I = \mfb + (x f + yz, y f + xz, z f + x y) = (x^2 - z^2, y^2 - z^2, x z, y z, x y + 5 z^2).
\]
Observe that for the second link we cannot take $f = z$ because $xyz + z^3$ is a zero divisor modulo $\mfb$.
\end{example}

Using basic properties of links, we conclude this section by relating the Hilbert function of $I$ to the Hilbert functions of $\mfa$ and $\mfb$. Recall that the Hilbert function of a graded $R$-module $M$ is defined by
$\mfh_M (j) = \dim_k [M]_j$.


\begin{corollary}
     \label{hilbert functions}
Adopt the notation and assumptions of Theorem~\ref{app_a_and_b}. Then, for all integers $j$, the Hilbert function of $R/I$ is given by
\[
\mfh_{R/I}(j) = \mfh_{R/\mfa} (j-d) +  \mfh_{R/\mfb}(j) - \mfh_{R/\mfb}(j-d).
\]
 \end{corollary}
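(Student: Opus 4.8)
The plan is to read the Hilbert function of $R/I$ off the two Gorenstein links of Theorem~\ref{app_a_and_b} by applying the linkage sequence \eqref{eq:st-exact-seq} to each of them, using crucially that both $\mfa$ and $I$ are Gorenstein. First I would record that the linking ideals $(\mfb, y)$ and $(\mfb, z)$, where $z := \om + f y$, are themselves Gorenstein of grade $g$: since $y$ and $z$ are non-zerodivisors on the Gorenstein ring $R/\mfb$, the quotients $R/(\mfb,y)$ and $R/(\mfb,z)$ are Gorenstein. In particular, multiplication by $y$ (of degree $e := \deg y$) and by $z$ (of degree $d + e$) gives the short exact sequence $0 \to R/\mfb(-e) \to R/\mfb \to R/(\mfb,y) \to 0$ and its $z$-analogue, whence
\[
\mfh_{R/(\mfb,y)}(j) = \mfh_{R/\mfb}(j) - \mfh_{R/\mfb}(j-e), \qquad \mfh_{R/(\mfb,z)}(j) = \mfh_{R/\mfb}(j) - \mfh_{R/\mfb}(j-d-e).
\]

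Next I would extract two relations from the links. Applying \eqref{eq:st-exact-seq} to the first link (between $\mfa$ and $J$ via $(\mfb,y)$), read so that $\mfa$ occupies the canonical-module slot, gives $J/(\mfb,y) \cong \om_{R/\mfa}(-s_1)$; since $\mfa$ is Gorenstein this is cyclic, a shift of $R/\mfa$. The twist is pinned down by a generator count: $J = (\mfb, y, \om)$, so $J/(\mfb,y)$ is generated by the class of $\om$, which has degree $d + e$, forcing $J/(\mfb,y) \cong (R/\mfa)(-d-e)$. Similarly, the second link (between $J$ and $I$ via $(\mfb,z)$) gives $J/(\mfb,z) \cong \om_{R/I}(-s_2)$, a shift of $R/I$ because $I$ is Gorenstein; here $J = (\mfb, y, z)$, so $J/(\mfb,z)$ is generated by the class of $y$ in degree $e$, giving $J/(\mfb,z) \cong (R/I)(-e)$. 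Feeding these through the exact sequences $0 \to J/(\mfb,y) \to R/(\mfb,y) \to R/J \to 0$ and its $z$-analogue yields
\[
\mfh_{R/(\mfb,y)}(j) - \mfh_{R/J}(j) = \mfh_{R/\mfa}(j - d - e), \qquad \mfh_{R/(\mfb,z)}(j) - \mfh_{R/J}(j) = \mfh_{R/I}(j-e).
\]

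Finally I would eliminate the unknown function $\mfh_{R/J}$ by subtracting the first of these identities from the second (the $\mfh_{R/J}$ terms cancel), substituting the two expressions for $\mfh_{R/(\mfb,y)}$ and $\mfh_{R/(\mfb,z)}$ from the first paragraph, and then replacing $j$ by $j + e$; the terms in $\mfh_{R/\mfb}$ collapse to $\mfh_{R/\mfb}(j) - \mfh_{R/\mfb}(j-d)$ and one is left exactly with the asserted formula. The only delicate point is the shift bookkeeping, on which the whole computation hinges. I would justify the two twists not by chasing the intrinsic canonical-module shifts $s_1, s_2$, but by the cleaner observation that $J/(\mfb,y)$ and $J/(\mfb,z)$ are cyclic, generated in degrees $d+e$ and $e$ respectively, so that their isomorphisms to shifts of the Gorenstein quotients $R/\mfa$ and $R/I$ are forced, up to these generator degrees, by the fact that $R/\mfa$ and $R/I$ are generated in degree $0$.
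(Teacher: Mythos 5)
Your proof is correct and follows essentially the same route as the paper: both arguments extract from the two links the short exact sequences $0 \to (R/\mfa)(-d-e) \to R/(\mfb,y) \to R/J \to 0$ and $0 \to (R/I)(-e) \to R/(\mfb,z) \to R/J \to 0$, then eliminate $\mfh_{R/J}$ and use that $y$ and $z$ are regular modulo $\mfb$ to reduce everything to $\mfh_{R/\mfb}$. The only cosmetic difference is how the twists are pinned down (you use the generator degrees of the cyclic modules $J/(\mfb,y)$ and $J/(\mfb,z)$, while the paper reads them off the mapping cone and the colon identity $(\mfb,y):\om=\mfa$), and both are valid.
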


\begin{proof}
In the proof of Theorem \ref{app_a_and_b} we have seen that the  mapping cone \eqref{lastMC}  gives the following short exact sequence;
$$\begin{CD}
 0 @>>>  (R/I) (-e)  @>>> R/(\mfb, z) @>>>  R/J  @>>>  0,
 \end{CD}
$$
where $\deg z = d+e$. Furthermore, by symmetry of liaison, the first link provides $(\mfb,y) : \om = \mfa$. This implies  the short exact sequence
$$
\begin{CD}   0 @>>> (R/\mfa)(-e-d) @>>>  R/(\mfb,y) @>>> R/J  @>>> 0  \end{CD}.
$$
Combining the above two sequences we deduce
\begin{align*}
 \mfh_{R/I}(j) & = \mfh_{R/(\mfb, z)} (j+e) - \mfh_{R/J} (j+e) \\
 & =  \mfh_{R/(\mfb, z)} (j+e) - \mfh_{R/(\mfb, y)} (j+e) + \mfh_{R/\mfa} (j-d) \\
 & = - \mfh_{R/\mfb} (j-d) + \mfh_{R/\mfb} (j) + \mfh_{R/\mfa} (j-d),
\end{align*}
as claimed.
\end{proof}


\section{A variation of the Kustin-Miller construction}\label{KM construction}

In the previous section we have seen that the Complex \eqref{lastMC} provides a free resolution of the Gorenstein ideal $I$, constructed in Theorem \ref{app_a_and_b}. However, this resolution is not minimal if $g \ge 3$. In this Section we construct a smaller resolution of $I$ by modifying the approach of Kustin and Miller in \cite{KM}.

\begin{theorem}
       \label{KM construction thm}
Adopt the notation and assumptions of Theorem~\ref{app_a_and_b}.
Then there is an short exact sequence of graded $R$-modules
$$\begin{CD}
  0 @>>> (\mfa/\mfb)(-d)@>>>   R/\mfb @>>> R/I @>>>  0.
 \end{CD}$$
 Moreover, the ideal $I$ has a graded free resolution of the form
 \begin{align*}
\minCDarrowwidth10pt \begin{CD}
 0 @>>> B_{g-1} (-d) @>>> \begin{matrix} A_{g-1} (-d) \\ \oplus \\ B_{g-2} (-d) \end{matrix} @>>> \begin{matrix} B_{g-2}  \\ \oplus \\ A_{g-2} (-d) \\ \oplus \\ B_{g-3} (-d) \end{matrix} @>>> \ldots  @>>> \begin{matrix} B_2 \\ \oplus \\ A_2 (-d) \\ \oplus \\ B_1 (-d) \end{matrix}
@>>> \begin{matrix}B_1 \\ \oplus \\ A_1 (-d) \end{matrix} @>>>  I  @>>> 0,
 \end{CD}
\end{align*}
where the maps are described in the proof below.
\end{theorem}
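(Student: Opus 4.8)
The plan is to establish the short exact sequence first and then obtain the resolution from it by a single mapping cone, followed by one Kustin-Miller cancellation at the top. For the sequence I would reuse the two liaison sequences already appearing in the proof of Corollary~\ref{hilbert functions}. Write $J/\mfb$ for the kernel of the natural surjection $R/\mfb \twoheadrightarrow R/J$ and consider inside it the cyclic submodules $S_1 = (R/\mfb)\bar y$ and $S_2 = (R/\mfb)\bar z$, where $z = \om + fy$. Since $\mfb : y = \mfb$ and $\mfb : z = \mfb$, multiplication by $y$ and $z$ gives $S_1 \cong (R/\mfb)(-e)$ and $S_2 \cong (R/\mfb)(-e-d)$ with $e = \deg y$, and $J = (\mfb, y, z)$ forces $S_1 + S_2 = J/\mfb$. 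Splicing the sequence $0 \to (R/\mfb)(-e) \to R/\mfb \to R/(\mfb,y) \to 0$ with the first link sequence $0 \to (R/\mfa)(-e-d) \to R/(\mfb,y) \to R/J \to 0$ identifies $(J/\mfb)/S_1 \cong (R/\mfa)(-e-d)$, and the same splicing with $z$ gives $(J/\mfb)/S_2 \cong (R/I)(-e)$. Now $S_2 \twoheadrightarrow (J/\mfb)/S_1$ is, after the shift, the natural surjection $R/\mfb \to R/\mfa$, so its kernel satisfies $S_1 \cap S_2 \cong (\mfa/\mfb)(-e-d)$; feeding this into $0 \to S_1 \cap S_2 \to S_1 \to (J/\mfb)/S_2 \to 0$ and shifting by $e$ produces exactly $0 \to (\mfa/\mfb)(-d) \to R/\mfb \to R/I \to 0$.

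Next I would resolve the left-hand term. The mapping cone of the comparison map $\al : \mathbb{B} \to \mathbb{A}$ of \eqref{comp_BtoA} has homology concentrated in the single spot computing $\ker(R/\mfb \to R/\mfa) = \mfa/\mfb$, and the unit entry $\al_0 = \id$ lets one cancel $B_0$ against $A_0$. This leaves a graded free resolution $\mathbb{F}$ of $\mfa/\mfb$ with $\mathbb{F}_0 = A_1$ and $\mathbb{F}_i = A_{i+1} \oplus B_i$ for $1 \le i \le g-1$. Twisting by $-d$ and applying Lemma~\ref{mapping cone} to a comparison map $\mathbb{F}(-d) \to \mathbb{B}$ lifting the inclusion $(\mfa/\mfb)(-d) \hookrightarrow R/\mfb$ gives a graded free resolution of $R/I$; deleting the augmentation copy of $R$ yields a resolution of $I$ whose term in homological degree $i$ is $B_{i+1} \oplus A_{i+1}(-d) \oplus B_i(-d)$, with the convention $B_g = 0$ at the top.

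The final step is the Kustin-Miller cancellation, which I expect to be the main obstacle. Since $A_g = R(-v)$ and $d = u-v$, the top summand $A_g(-d)$ equals $R(-u) = B_{g-1}$, a module that also occurs one homological step lower. The differential entry joining these two copies of $R(-u)$ is the degree-zero component $c \in [R]_0$ of $\psi_{g-1}$ along $A_g(-d) \to B_{g-1}$. The crux is to show that $c$ is a unit. I would deduce this from the long exact $\Ext^{\bullet}_R(-,R)$-sequence of the short exact sequence above: since $(\mfa/\mfb)(-d)$ and $R/\mfb$ are maximal Cohen-Macaulay of codimension $g-1$ while $R/I$ has codimension $g$, that sequence collapses to an injection $\Ext^{g-1}_R(R/\mfb,R) \hookrightarrow \Ext^{g-1}_R((\mfa/\mfb)(-d),R)$ on the canonical modules. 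This map is precisely the one induced by $\psi$ at the top, and its injectivity forces the sole degree-zero top entry $c$ to be a unit.

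Granting $c \neq 0$, cancelling this pair removes $B_{g-1}$ from homological degree $g-2$ and $A_g(-d)$ from degree $g-1$, turning the top two terms into $A_{g-1}(-d) \oplus B_{g-2}(-d)$ and $B_{g-1}(-d)$, in agreement with the asserted resolution; the remaining differentials are read off from the mapping cone after the cancellation. The only genuinely technical point is the unit claim for $c$, which is exactly where the Kustin-Miller argument is being adapted, so I would present that verification in full and treat the rest as bookkeeping with the maps $\al_i$, $b_i$, and $a_i$ already fixed in Theorem~\ref{app_a_and_b}.
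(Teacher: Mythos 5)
Your derivation of the short exact sequence is correct and is genuinely different from the paper's. The paper builds the map $(\mfa/\mfb)(-d)\to R/\mfb$ explicitly (it is induced by $\be_1+fa_1$ on the mapping cone of $\al:\mathbb{B}\to\mathbb{A}$) and then proves injectivity by comparing Hilbert functions via Corollary~\ref{hilbert functions}; your argument instead works inside $J/\mfb$ with the cyclic submodules $(R/\mfb)\bar y$ and $(R/\mfb)\bar z$, identifies $S_1\cap S_2\cong(\mfa/\mfb)(-e-d)$ via the second link's colon computation, and gets the sequence directly from the linkage data. That is a clean simplification for this half of the statement (modulo a small indexing slip: the degree-$0$ term of your cone is $B_1\oplus A_1(-d)$, with no $B_0(-d)$).

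For the resolution, however, there is a genuine gap at exactly the point you flag as "the main obstacle." The paper's entire technical apparatus --- the Buchsbaum--Eisenbud DG algebra structures on $\mathbb{A}$ and $\mathbb{B}$, the perfect pairings defining $\be_i:A_i\to B_{i-1}(d)$, and the Kustin--Miller map $\xi:\mathbb{B}\otimes\mathbb{B}\to\mathbb{A}[1]$ producing the homotopy $h$ --- exists precisely to exhibit an \emph{explicit} comparison map whose top component $\be_g$ is multiplication by the unit $(-1)^{g+1}$, and to supply the explicit differentials that the theorem statement promises ("where the maps are described in the proof below"). You replace this with an arbitrary lift $\psi$ and assert that the scalar $c:A_g(-d)\to B_{g-1}$ is a unit because $\Ext^{g-1}_R(R/\mfb,R)\to\Ext^{g-1}_R((\mfa/\mfb)(-d),R)$ is injective. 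That inference does not hold: an injective map of graded modules can send the cyclic generator of its source into $\fm$ times the target, so injectivity controls the kernel but says nothing about whether a generator hits a minimal generator. Concretely, if $c=0$ the generator of $\coker(b_{g-1}^*)$ maps to $(0,h^*\epsilon)$, and nothing in the long exact $\Ext$ sequence excludes this element generating an embedded copy of $\omega_{R/\mfb}$ inside $\fm\cdot\coker\bigl((\pa^{\mathbb{F}}_{g-1})^*\bigr)$; the other way the top rank could drop is through a unit entry of $\al_{g-1}$, which is not excluded either. The residue of $c$ modulo $\fm$ is independent of the chosen lift (the ambiguity is $s\circ a_g$ with $a_g$ minimal), so the claim is \emph{true} --- but the only available proof is to write down one lift for which the entry is visibly $\pm 1$, which is exactly the Kustin--Miller construction you have deferred. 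As written, both the unit claim and the promised description of the maps remain unproven.
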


\begin{proof} We follow the approach in \cite{KM}, but  adjust it suitably. Thus, we focus on the needed modifications and refer for more details to \cite{KM}.

First, the mapping cone $\mathbb{M}$ of $\alpha : \mathbb{B}\rightarrow \mathbb{A}$ gives the exact sequence:
\begin{align}\label{mapp.cone M}
\minCDarrowwidth10pt\begin{CD}
\mathbb{M} : \ 0 @>>> \begin{matrix} A_g \\ \oplus \\ B_{g-1} \end{matrix} @>>>  \ldots @>>> \begin{matrix} A_{j+1}\\ \oplus \\ B_j \end{matrix} @>{\begin{bmatrix} a_{j+1} & \alpha_j \\ 0 & -b_j \end{bmatrix}}>>  \begin{matrix} A_{j+1}\\ \oplus \\ B_j \end{matrix}  @>>> \ldots  @>>> \begin{matrix} A_2 \\ \oplus  \\ B_1 \end{matrix}  @>{\begin{bmatrix}a_2 &\alpha_1\end{bmatrix}}>> A_{1} @>>> \mfa/\mfb.
\end{CD}
\end{align}

Second, by \cite[Proposition 1.1]{BE}, the resolutions $\mathbb{A}$ and $\mathbb{B}$ admit a DGC algebra structure. These induce perfect pairings $B_i \times B_{g-1-i} \to B_g$ and $A_i \times A_{g-i} \to A_g$. We use the former to define degree $d$ homomorphisms $\be_i: A_i \to B_{i-1} (d)$ by mapping $x_i \in A_i$ on the unique element $\be_i (x_i)$ such that, for all $z_{g-i} \in B_{g-i}$,
\begin{equation}
    \label{eq:def beta}
\be_i (x_i) \cdot z_{g - i} = (-1)^{i+1} x_i \cdot \al_{g-i} (z_{g-i})
\end{equation}
in $A_g = B_{g-1}(d)$. It follows that
\begin{equation}
   \label{eq:def-beta-1}
\be_1 (x_1) = x_1 \cdot \al_{g-1} (1_{B_{g-1}})
\end{equation}
and that $\be_{g}$ is multiplication by the unit $(-1)^{g+1}$.
Using the perfect pairings on $\mathbb{A}$, we also get
\begin{equation}
     \label{eq:beta-comm}
  \be_i \circ a_{i+1} = b_i \circ \be_{i+1}.
\end{equation}

Third, \cite[Lemma 1.1]{KM} shows that Diagram \eqref{comp_BtoA} induces a graded homomorphism of complexes $\xi : \mathbb{B} \otimes \mathbb{B}\rightarrow\mathbb{A}[1]$ such that, for all $z_i \in B_i$:
\begin{itemize}
 \item [(i)] $B_i\otimes B_j\rightarrow A_{i+j+1}$ is defined if $i,j\geq 0$,
 \item [(ii)] $\xi(z_i\otimes z_j)=(-1)^i\xi(z_j\otimes z_i)$,
 \item [(iii)] $\xi(z_i\otimes z_i) = 0$ if $i$ is odd,
 \item [(iv)] $\xi(z_0\otimes z_i) = 0$, \text{  and}
 \item [(v)] $\alpha_{i+j}(z_iz_j)- \alpha_i (z_i) \cdot \alpha_j (z_j) = \xi(b_i (z_i) \otimes z_j) + (-1)^{i}\xi(z_i\otimes b_j( z_j)) + a_{i+j+1} (\xi(z_i\otimes z_j))$.
 \end{itemize}

Finally, we  define a degree $d$ homomorphism of complexes  $h:\mathbb{B} \rightarrow \mathbb{B}(d)$ by mapping $z_i \in B_i$ on the   unique element $h_i(z_i)$ such that, for all  $z_{g-1-i}\in B_{g-1-i}$,
\[
h_i(z_i) \cdot z_{g-1-i}=(-1)^{i+1}\xi(z_i\otimes z_{g-1-i}).
\]
Notice that the above Condition (iv) implies $h_0 = h_{g-1} = 0$ and that Condition (v) yields
\begin{equation}
  \label{eq:homotopy-cond}
  \be_i \circ \al_i = h_{i-1} \circ b_i + b_i \circ h_i.
\end{equation}


Consider now the following diagram with exact rows $\mathbb{M}$ and $\mathbb{B}$, respectively:
\begin{align}\label{last mapp.cone}
\minCDarrowwidth10pt\begin{CD}
  0 @>>>  A_g\oplus B_{g-1} @>>>   ....     @>>> A_{j+1}\oplus B_j @>>>  .... @>>> A_2\oplus B_1  @>[a_2, \alpha_1]>> A_{1} @>>> \mfa/\mfb \\
  @.  @VV[\beta_g,(-1)^{g}f \id]V   @.         @VV[\beta_{j+1}, h_j +(-1)^{j+1}f \id]V    @.       @VV[\beta_2, h_1 +f \id]V  @VV\beta_1+f a_1V  \\
  0 @>>>  B_{g-1}(d) @>>>  ....   @>>> B_j(d)  @>b_{j}>>        .... @>>>  B_1(d) @>b_{1}>> R(d) @>>> (R/\mfb)(d)
\end{CD}
\end{align}
Using Equations \eqref{eq:beta-comm} and \eqref{eq:homotopy-cond}, we see that all the squares commute. It follows that $\beta_1+f a_1$ induces a homomorphism $\ffi: \mfa/\mfb \to (R/\mfb) (d)$ such that  the resulting right-most square in the above diagram also becomes commutative.  Thus, the mapping cone  gives the chain complex
\begin{align}\label{L}
\minCDarrowwidth20pt \begin{CD}
\mathbb{L} :\  @. 0 @>>> \begin{matrix} A_g\\ \oplus \\ B_{g-1}\end{matrix} @>{l_{g}}>> \begin{matrix}B_{g-1} (d)\\ \oplus \\ A_{g-1} \\ \oplus \\ B_{g-2} \end{matrix} @>>> ... @>>> \begin{matrix} B_2(d) \\ \oplus \\ A_2 \\ \oplus \\ B_1 \end{matrix}
@>{l_2}>> \begin{matrix}B_1(d) \\ \oplus \\ A_1 \end{matrix} @>{l_1}>>  R(d),
 \end{CD}
\end{align}
where the maps are
$$
l_1= \begin{bmatrix} b_1 & \beta_1+fa_1 \end{bmatrix}, \, \, \, l_2 = \begin{bmatrix}b_2 & \be_2 & h_1+f \id \\ 0 & -a_2 & -\alpha_1\end{bmatrix}, \, \, \, l_g = \begin{bmatrix}  \beta_g & h_{g-1}+(-1)^{g} f  \id \\  -a_g & -\alpha_{g-1} \\  0 & b_{g-1} \end{bmatrix},
$$
and
$$
l_i = \begin{bmatrix} b_i & \beta_i & h_{i-1}+(-1)^{i}f  \id \\ 0 & -a_i & -\alpha_{i-1} \\ 0 & 0 & b_{i-1} \end{bmatrix} \text{ if } 3 \le i \le g-1.
$$
Using Equation \eqref{eq:def-beta-1}, it follows that
\[
\Image l_1 = \mfb + (\al_{g-1}^*+ f a_g^*).
\]
All this remains true if we replace $f$ by $(-1)^g f$. Then Theorem \ref{app_a_and_b} shows that $I =
\mfb + (\al_{g-1}^*+ (-1)^g f a_g^*)$ is a Gorenstein ideal, and Diagram \eqref{last mapp.cone} yields    that  $I$ fits into an exact sequence
\begin{align*}\begin{CD}\label{ses}
 (\mfa/\mfb)(-d) @>\ffi>>  R/\mfb @>>> R/I  @>>>  0
 \end{CD}.
\end{align*}
It allows us to compute the Hilbert function of $\ker \ffi$.  Comparing with Corollary \ref{hilbert functions}, we deduce that the kernel of $\ffi$ is trivial. Hence, we obtain the desired short exact sequence
\begin{align*}
\begin{CD}
0 @>>> (\mfa/\mfb)(-d) @>\ffi>>  R/\mfb @>>> R/I  @>>>  0.
 \end{CD}
\end{align*}
Now it follows that the above complex $\mathbb{L}$ gives a free resolution of $I (d)$. Since $\beta_{g}$ is multiplication by a unit, we can split off the isomorphic free modules $A_g$ and $B_{g-1} (d)$ in the map $l_g$. After this  cancellation we get a  complex that  is, up to  a degree shift,  the claimed free resolution of $I$.
\end{proof}

The free resolution of $I$ we just derived is  smaller than the one obtained from the Complex  \eqref{lastMC}. In fact, it is often minimal.

\begin{corollary}
       \label{minimality}
If the polynomial $f$ is not a unit and each map $\al_i$ is minimal whenever $1 \le i \le g-1$, that is,
$\Image \al_i \subset \fm A_i$, then the resolution of $I$ described in Theorem \ref{KM construction thm}  is a graded minimal free resolution of $I$.
\end{corollary}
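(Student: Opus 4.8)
My plan is to invoke the standard criterion that a graded free resolution is minimal exactly when every entry of every differential matrix lies in the maximal homogeneous ideal $\fm$, and then to inspect the maps $l_1,\dots,l_g$ of the complex $\mathbb{L}$ in \eqref{L} block by block. First I would record what the cancellation of the unit $\be_g$ does to these matrices: splitting off the isomorphic summands $A_g$ and $B_{g-1}(d)$ modifies only $l_g$ and deletes the $B_{g-1}(d)$-column of $l_{g-1}$, leaving every other $l_i$ untouched. Using $h_{g-1}=0$ and $\be_g^{-1}=(-1)^{g+1}$, the surviving top differential becomes $\begin{bmatrix} -\al_{g-1}-f a_g \\ b_{g-1}\end{bmatrix}$, whose entries already lie in $\fm$. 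So the task reduces to showing that each block $b_i$, $a_i$, $\al_i$, $\be_i$, and $h_{i-1}+(-1)^i f\,\id$ occurring in the remaining matrices has entries in $\fm$.

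The blocks $b_i$ and $a_i$ lie in $\fm$ because $\mathbb{B}$ and $\mathbb{A}$ are minimal, the blocks $\al_i$ lie in $\fm$ by hypothesis, and $f\,\id$ lies in $\fm$ because $f$ is not a unit. The blocks $\be_i$ for $1\le i\le g-1$ I would handle directly from their defining relation \eqref{eq:def beta}: for $x_i\in A_i$ and all $z_{g-i}\in B_{g-i}$ one has $\be_i(x_i)\cdot z_{g-i}=(-1)^{i+1}x_i\cdot\al_{g-i}(z_{g-i})$, and since $1\le g-i\le g-1$ the map $\al_{g-i}$ is minimal, so the right-hand side lies in $\fm A_g$ for every $z_{g-i}$; perfectness of the pairing $B_{i-1}\times B_{g-i}\to B_{g-1}$ then forces $\be_i(x_i)\in\fm B_{i-1}(d)$, i.e.\ $\be_i$ has entries in $\fm$. (Equation \eqref{eq:def-beta-1} is the case $i=1$, combined there with the already-treated block $fa_1$.)

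The genuinely delicate blocks are $h_{i-1}+(-1)^i f\,\id$; as $f\in\fm$ this comes down to proving that each homotopy $h_j$ has entries in $\fm$. By definition $h_j(z_j)\cdot z_{g-1-j}=(-1)^{j+1}\xi(z_j\otimes z_{g-1-j})$, so by the same pairing argument it suffices to show that $\xi$ takes values in $\fm\mathbb{A}[1]$, and here the maps $h_j$ only ever see the top component $\xi\colon B_j\otimes B_{g-1-j}\to A_g$ (the two tensor factors having homological degrees summing to $g-1$). This is the main obstacle, and it does not follow by merely reducing Condition (v) modulo $\fm$, since $\xi$ is defined as a lift of the multiplicativity defect and such lifts are ambiguous up to $\Ker a_\bullet\subseteq\fm\mathbb{A}$.

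To overcome it I would argue as follows. A potential unit entry of $h_j$ forces the corresponding value $\xi(z_j\otimes z_{g-1-j})$ to sit in $A_g$ in internal degree $v$; since $B_{g-1}=R(-u)$ vanishes below degree $u>v$ (when $d\ge1$), the product $z_j z_{g-1-j}$ vanishes, so Condition (v) reads $a_g\bigl(\xi(z_j\otimes z_{g-1-j})\bigr)=W$ with $W=-\al_j(z_j)\al_{g-1-j}(z_{g-1-j})-\xi(b_j z_j\otimes z_{g-1-j})-(-1)^{j}\xi(z_j\otimes b_{g-1-j}z_{g-1-j})$. Because $a_g$ is injective and, by the self-duality of $\mathbb{A}$, its single column consists of the minimal generators of $\mfa$ (hence has an entry in $\fm\setminus\fm^2$), the conclusion $\xi(z_j\otimes z_{g-1-j})\in\fm A_g$ is equivalent to $W\in\fm^2A_{g-1}$. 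The term $\al_j(z_j)\al_{g-1-j}(z_{g-1-j})$ lies in $\fm^2$ by minimality of $\al$, and the two $\xi$-terms carry coefficients from $\fm$ (as $b_j$ is minimal); so the key point is an induction on homological degree showing that the lower values of $\xi$ entering $W$ themselves lie in $\fm\mathbb{A}$, which I expect to be the most laborious step. The degenerate case $d=0$, where $f=0$, I would treat separately. Granting $\xi\in\fm\mathbb{A}[1]$, every block lies in $\fm$ and the resolution of Theorem~\ref{KM construction thm} is minimal.
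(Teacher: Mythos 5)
Your overall strategy is the same as the paper's: reduce minimality to the statement that every block of every differential of $\mathbb{L}$ has entries in $\fm$, dispose of the blocks $b_i$, $a_i$, $\al_{i}$ and $f\,\id$ immediately, and deduce minimality of $\be_i$ for $1\le i\le g-1$ from minimality of $\al_{g-i}$ via the perfect pairing and Equation~\eqref{eq:def beta}. That is exactly, and essentially all of, the published proof. Where you go further is in flagging the blocks $h_{i-1}+(-1)^i f\,\id$: the paper never mentions the homotopies $h_j$ and simply asserts that ``all maps are minimal,'' whereas you correctly observe that minimality of $h_j$ --- equivalently, that $\xi$ maps $B_j\otimes B_{g-1-j}$ into $\fm A_g$ --- is not a formal consequence of minimality of $\al$, since a preimage under $a_g$ of an element of $\fm A_{g-1}$ need not lie in $\fm A_g$. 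You have located the one genuinely delicate point.

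The gap is that your treatment of this point does not close. First, the hinge of your argument --- that $a_g$ has an entry in $\fm\setminus\fm^2$, so that $\xi(z_j\otimes z_{g-1-j})\in\fm A_g$ is equivalent to $W\in\fm^2A_{g-1}$ --- fails whenever every minimal generator of $\mfa$ has degree at least $2$ (then every entry of $a_g$ lies in $\fm^2$ and the equivalence collapses). Second, the induction you defer (``the lower values of $\xi$ entering $W$ lie in $\fm\mathbb{A}$'') is not carried out, and the case $d=0$, where $f=0$ and the degree argument that kills $z_jz_{g-1-j}$ is unavailable, is exactly the case you set aside. This is not a removable technicality: replacing $\al_{g-1}$ by $\al_{g-1}+c\,a_g$ for a unit $c$ yields another legitimate comparison map, still minimal when $a_g$ has no unit entries, for which Condition~(v) forces $\xi(z_1\otimes w_1)=c\cdot e_{A_g}$ in small examples (take $\mfb=(x^3,y^3)\subset\mfa=(x^2,y^2,z^2)$ in $K[x,y,z]$, so $d=0$ and $f=0$); then $h_1=c\,\id$ has unit entries and the complex of Theorem~\ref{KM construction thm} is not minimal even though all hypotheses of the corollary hold for that choice of data. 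So minimality of the $h_j$ is an additional normalization of the comparison map (choosing $\al$ as close to multiplicative as possible), not something that follows from ``$f$ is not a unit and the $\al_i$ are minimal''; any complete proof must either impose or establish it, and neither your sketch nor, it should be said, the paper's own two-line argument does so.
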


\begin{proof} Since the maps $\al_i$ are minimal, the definition of $\be_i$ (see Equation \eqref{eq:def beta}) implies that also $\be_i$ is a minimal map whenever $1 \le i \le g-1$. Now the description of the maps in the free resolution obtained in Theorem \ref{KM construction thm} shows that all its maps between free modules are minimal. Hence, it is a minimal resolution.
\end{proof}

The short exact sequence in Theorem~\ref{KM construction thm} allows us to re-interpret Theorem~\ref{app_a_and_b} in terms of liaison theory. To this end we recall the following definition.

Suppose $J\subset I\cap K$ are homogeneous ideals in $R$ with $\grade(I) = \grade(J) +1$, and $J$ is Cohen-Macaulay and generically Gorenstein. If there is an isomorphism of  graded $R$-modules
\[
I/J (-s) \cong K/J,
\]
then it is said that $K$ is obtained from $I$ by an {\em elementary biliaison} on $J$. It has the same grade as $I$.  (See \cite{KMMNP01,Mi, Har}  for more details.)

Using this concept, we get:

\begin{proposition}\label{biliaison}
The homogeneous Gorenstein ideal $I = (\mfb, \al^*_{g-1} + (-1)^g f a_g^*)$ in Theorem \ref{app_a_and_b} is obtained from $\mfa$ by an elementary biliaison on $\mfb$.
\end{proposition}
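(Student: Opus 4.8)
The plan is to verify directly that the triple $(\mfa, I, \mfb)$ satisfies the definition of elementary biliaison recalled above, playing the roles of $(I, K, J)$ there, and to read off the required module isomorphism from the short exact sequence established in Theorem~\ref{KM construction thm}. Thus the whole argument is essentially a bookkeeping consequence of that theorem.

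First I would dispatch the hypotheses on the bottom ideal. By assumption $\mfb$ is a Gorenstein ideal, so $R/\mfb$ is Gorenstein, hence Cohen-Macaulay and in particular generically Gorenstein. The containment $\mfb \subset \mfa \cap I$ is immediate: $\mfb \subset \mfa$ is part of the hypotheses of Theorem~\ref{app_a_and_b}, while $\mfb \subset I$ holds because $I = (\mfb, \al^*_{g-1} + (-1)^g f a_g^*)$ by construction. Finally the grades match, since $\grade \mfa = g = (g-1)+1 = \grade \mfb + 1$.

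The core of the argument is the short exact sequence
\[
0 \to (\mfa/\mfb)(-d) \xrightarrow{\ffi} R/\mfb \to R/I \to 0
\]
supplied by Theorem~\ref{KM construction thm}. Since $\ffi$ is injective, it identifies $(\mfa/\mfb)(-d)$ with its image, namely the kernel of the surjection $R/\mfb \twoheadrightarrow R/I$. As $\mfb \subset I$, this kernel is exactly $I/\mfb$. Hence $\ffi$ induces a graded isomorphism
\[
(\mfa/\mfb)(-d) \cong I/\mfb,
\]
which is precisely the isomorphism required in the definition of elementary biliaison, with shift $s = d$. Combined with the verifications above, this shows that $I$ is obtained from $\mfa$ by an elementary biliaison on $\mfb$.

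I do not expect a serious obstacle here: the substantive work---producing the short exact sequence and, crucially, checking via the Hilbert-function comparison of Corollary~\ref{hilbert functions} that $\ker \ffi = 0$---has already been carried out in the proof of Theorem~\ref{KM construction thm}. The only points that need care are bookkeeping ones: correctly matching $(\mfa, I, \mfb)$ with the roles $(I, K, J)$ of the definition (so that the higher-grade ideal $\mfa$ is the one being modified, and $I$ is the ideal produced), and confirming that the image of the injection $\ffi$ is literally $I/\mfb$ rather than some twist or submodule thereof, which is transparent from the exactness of the sequence.
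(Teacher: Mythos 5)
Your argument is correct and coincides with the paper's own proof: both invoke the short exact sequence from Theorem~\ref{KM construction thm} to obtain the isomorphism $(\mfa/\mfb)(-d) \cong I/\mfb$ and then appeal to the definition of elementary biliaison, using that $\mfb$ is Gorenstein. Your additional bookkeeping (containments, grade count, identification of the image of $\ffi$ with $I/\mfb$) only makes explicit what the paper leaves implicit.
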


\begin{proof}
Theorem \ref{KM construction thm} provides the short exact sequence
$$\begin{CD}
  0 @>>> (\mfa/\mfb)(-d)@>\varphi>>   R/\mfb @>>> R/I @>>>  0
 \end{CD}.$$
 Thus, we get an isomorphism  $\mfa/\mfb(-d)\cong I/\mfb$. Since $\mfb$ is Gorenstein the claim follows directly from the definition of an elementary biliaison.
\end{proof}

So far we have studied the construction of a new homogeneous Gorenstein ideal $I$ of grade $g$ from smaller  homogeneous Gorenstein ideals $\mfb \subset \mfa$ of grades $g-1$ and $g$,
respectively. It is natural to ask when this construction can be reversed. One more precise version of this problem is whether, for  given homogeneous Gorenstein ideals $I$ and $\mfa$ of grade $g$, there is a homogeneous Gorenstein ideal
$\mfb \subset \mfa$ of grade $g-1$ such that $I$ can be obtained from  $\mfa$ by an elementary biliaison on $\mfb$. This question has already been considered in the local case in \cite{KM}. We now derive a necessary condition in the graded case. Recall that the Castelnuovo-Mumford regularity of a homogenous Gorenstein ideal $I \subset R$ is
\[
\reg I = \min \{m \; | \;  [H_{\mathfrak m}^i (I)]_{j} = 0 \text{ whenever } i+ j > m \},
\]
where $H_{\mathfrak m}^i (I)$ denotes the $i$-th local cohomology module with support in ${\mathfrak m}$. If $I$ has finite projective dimension over $R$, then its regularity can also be computed from a minimal free resolution as
\[
\reg I = \min \{m \; | \; [\Tor_i^R (I, R/{\mathfrak m})]_j = \text{ whenever } i - j > m \}.
\]

\begin{corollary}
     \label{cor:necc cond}
Let $I$ and $\mfa$ be homogeneous Gorenstein ideals of grade $g$.
If $\reg{I} - \reg{\mfa} $ is not even, then there is no homogeneous Gorenstein ideal $\mfb \subset \mfa$ such that $I$ can be obtained from  $\mfa$ by an elementary biliaison on $\mfb$.
\end{corollary}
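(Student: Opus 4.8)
The plan is to prove the contrapositive: assuming that $I$ is obtained from $\mfa$ by an elementary biliaison on some homogeneous Gorenstein ideal $\mfb$, I will show that $\reg I - \reg \mfa$ must be even, in fact equal to $2d$ where $d = u - v \geq 0$ is the biliaison shift appearing in Theorem~\ref{app_a_and_b}. The whole argument reduces to relating the regularity of a Gorenstein quotient to a single degree shift in its (self-dual) free resolution.

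\medskip

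\emph{First step: regularity via the top of the resolution.} For any homogeneous Gorenstein ideal $\mfc$ of grade $g$, the quotient $R/\mfc$ is perfect, so it admits a finite graded free resolution of length $g$; by self-duality \cite{BE} its top free module has rank one, say $R(-t(\mfc))$. Local duality together with the Gorenstein property gives
\[
\om_{R/\mfc} \cong \Ext^g_R(R/\mfc, R)(s) \cong (R/\mfc)(t(\mfc) + s),
\]
where $\om_R \cong R(s)$, since $\Ext^g_R(R/\mfc,R)$ is the cokernel of the dual of the resolution and is cyclic, generated in degree $-t(\mfc)$. As $R/\mfc$ is Cohen--Macaulay, only its top local cohomology is nonzero, so reading $\reg$ off from $\om_{R/\mfc}$ yields $\reg(R/\mfc) = (\dim R - g) + t(\mfc) + s$. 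The key observation is that the additive constant $\dim R - g + s$ depends only on $R$ and on the grade $g$, not on $\mfc$. Hence, since $I$ and $\mfa$ both have grade $g$ and $\reg \mfc = \reg(R/\mfc) + 1$,
\[
\reg I - \reg \mfa = \reg(R/I) - \reg(R/\mfa) = t(I) - t(\mfa).
\]

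\medskip

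\emph{Second step: identifying the two top shifts.} From the minimal resolution $\mathbb{A}$ we have $A_g = R(-v)$, so $t(\mfa) = v$. For $I$, Theorem~\ref{KM construction thm} exhibits a graded free resolution whose top module is $B_{g-1}(-d) = R(-u-d)$. Because $R/I$ is Gorenstein of grade exactly $g$, its minimal free resolution has length exactly $g$ with a rank-one top module; since the top module $B_{g-1}(-d)$ in Theorem~\ref{KM construction thm} already has rank one, it cannot be cancelled away (any cancellation there would shorten the resolution below length $g$), so it is the top of the minimal resolution and $t(I) = u + d$. Using $d = u - v$, we conclude
\[
\reg I - \reg \mfa = t(I) - t(\mfa) = (u + d) - v = (u-v) + d = 2d,
\]
which is even. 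The contrapositive is exactly the assertion of the corollary.

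\medskip

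\emph{Main obstacle.} The delicate points are, first, justifying the formula $\reg(R/\mfc) = \dim R - g + t(\mfc) + s$ in the full generality of an arbitrary graded Gorenstein ring $R$ (not just a polynomial ring), and, second, pinning down $t(I) = u + d$ rigorously, i.e.\ confirming that no cancellation can alter the top of the resolution of $I$ produced in Theorem~\ref{KM construction thm}. Both rest on the perfectness of $R/\mfc$, which guarantees a finite self-dual free resolution and makes the local-duality identification of $\om_{R/\mfc}$ available; once these are in place, the remainder is bookkeeping with degree shifts and the identity $d = u - v$.
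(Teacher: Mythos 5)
Your proof is correct and follows essentially the same route as the paper: both arguments read off $\reg\mfa = v-g+1$ from the top module $A_g=R(-v)$ of $\mathbb{A}$ and $\reg I = u+d-g+1$ from the top module $B_{g-1}(-d)=R(-u-d)$ of the resolution in Theorem~\ref{KM construction thm}, and conclude $\reg I - \reg\mfa = (u-v)+d = 2d$. Your additional justifications --- the local-duality argument that the regularity of a Gorenstein quotient is governed by the single top shift, and the observation that the rank-one top module cannot cancel --- are details the paper leaves implicit, but they do not change the approach.
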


\begin{proof}
From the degree shift of the last free module in the minimal free resolution of $\mfa$, we see (using the notation in Diagram \eqref{comp_BtoA}) that
\[
\reg \mfa  = v - g+1.
\]
If $I$ is obtained from $\mfa$ and $\mfb$ as in Theorem \ref{app_a_and_b}, then  the free resolution of $I$ described in Theorem~\ref{KM construction thm} gives
\[
\reg I = u + d - g + 1.
\]
It follows that
\[
\reg I - \reg \mfa = u- v + d = 2d.
\]
This implies the assertion.
\end{proof}

Using our description of the minimal free resolution in Theorem~\ref{KM construction thm}, we show in
Example~\ref{ex:generic 1551} below that there is a Gorenstein ideal $I$ that cannot be obtained by the construction in Theorem~\ref{app_a_and_b} if $\reg \mfa < \reg I$.
In general, it is open when a given Gorenstein ideal can be produced by an elementary biliaison as in Theorem \ref{app_a_and_b}.


\section{Examples}\label{Examples}

We describe various examples for the construction in Theorem \ref{app_a_and_b}. We conclude with exhibiting an ideal that can not be produced using this construction.

We begin with the easiest case, where $\mfa$ and $\mfb$ are complete intersection ideals. It extends Example \ref{ex:two-small-ci}. This case has also been discussed in the spirit of the original Kustin-Miller construction in \cite[Section 4]{Pa}.

\begin{example}
Let $R$ be a graded Gorenstein ring, and let $h_1,\ldots,h_g$ and $p_1,\ldots,p_{g-1}$ be regular sequences of homogeneous elements such that
\[
\mfb = (p_1,\ldots,p_{g-1}) \subset (h_1,\ldots,h_g) = \mfa.
\]
Then there is a homogeneous $g \times (g-1) $ matrix $M$ such that (as matrices)
\[
\begin{pmatrix}
p_1 & \ldots & p_{g-1}
\end{pmatrix} = \begin{pmatrix}
h_1 & \ldots & h_g
\end{pmatrix}  \cdot M.
\]
Setting $u = \sum \deg p_i$ and $v = \sum \deg h_j$, we get the following comparison map between the graded minimal free resolutions of $R/\mfa$ and $R/\mfb$
\[
\begin{CD}
   @.          0   @>>>        R(-u)  @>b_{g-1}>> .... @>>>  B_{1}  @>b_{1}>>   R  @>>> R/\mfb \\
  @.      @ VVV                     @VV\bigwedge^{g-1}MV            @.         @VVMV     @VV=V @VVV\\
  0 @>>> R(-v)  @>a_{g}>>   A_{g-1}        @>a_{g-1}>>  .... @>>>  A_{1}  @>a_{1}>>   R  @>>> R/\mfa
 \end{CD}
 \]
Denote by $M_i$ the square matrix obtained by deleting row $i$ of $M$ . Then, by Theorem~\ref{app_a_and_b}, for a sufficiently general $f \in R$ of degree $d = v-u \ge 0$, the ideal
\[
I = (p_1,\ldots,p_{g-1}, \det M_1 + f h_1,\ldots,\det M_g + f h_g)
\]
is a homogeneous Gorenstein ideal of grade $g$. Moreover, if no entry of the matrix $M$ is a unit, then the graded free resolution of $I$ described in Theorem~\ref{KM construction thm} is minimal. In particular, then $I$ has $2g-1$ minimal generators.

We can be more explicit in the following special case. Assume $x_1, x_2, \cdots , x_g$ is a regular sequence of homogeneous elements in $R$. Consider $\mathfrak{b} = (x_1^{m_1}, x_2^{m_2}, \cdots ,x_{g-1}^{m_{g-1}}) \subset (x_1^{n_1}, x_2^{n_2}, \cdots , x_g^{n_g}) = \mfa$ and assume $d := \sum\limits_{i=1}^{g-1}m_i - \sum\limits_{i=1}^{g}n_i \geq 0.$ Then, for a sufficiently general $f \in R$ of degree $d$,
\[
I = (x_1^{m_{1}},\cdots , x_{g-1}^{m_{g-1}} , fx_1^{n_{1}} , \cdots ,fx_{g-1}^{n_{g-1}},   c + fx_g^{n_{g}})
\]
is a Gorenstein ideal, where $c = \prod\limits_{j=1}^{g-1} x_j^{m_j-n_j}$. Moreover, if $m_j > n_j$ for each $j = 1,\ldots,g-1$, then the resolution in Theorem~\ref{KM construction thm} is a minimal free resolution of $I$.
\end{example}

In the next example we show that all the Gorenstein ideals with socle degree two can be obtained by one elementary biliaison from a complete intersection.

\begin{example}
    \label{exa:sally ideal}
Consider the Artinian Gorenstein ideals $I \subset R = K[x_1,\ldots,x_n]$ with $h$-vector $(1, n, 1)$, where $K$ is a field. These ideals have been classified by Sally in \cite[Theorem 1.1]{sally}. Each such ideal is of the form
\[
I = (x_i x_j \; | \; 1 \le i < j \le n) + (x_1^2 - c_1 x_n^2, \ldots, x_{n-1}^2 - c_{n-1} x_n^2),
\]
where $c_1, \ldots,c_{n-1} \in K$ are suitable units. It can be obtained by an elementary biliaison as in Theorem \ref{app_a_and_b} from $\mfa = (x_1,\ldots,x_n)$ on $\mfb R$, where $\mfb$ is such a Sally ideal in $n-1$ variables. More precisely, define the ideal $\mfb$ as
\[
\mfb = (x_i x_j \; | \; 1 \le i < j \le n-1) + (x_1^2 - c_1 x_n^2, \ldots, x_{n-2}^2 - c_{n-2} x_n^2).
\]
Then it is not too difficult to see that there are the following links
\[
\mfa \linkby[(\mfb, x_n)] (\mfb, x_n, x_{n-1}^2) \linkby[(\mfb, x_{n-1}^2 - c_{n-1} x_n^2)] I.
\]
Note that $(\mfb, x_n, x_{n-1}^2) = (x_1,\ldots,x_{n-1})^2 + (x_n)$.
\end{example}

The following classical example has been studied from various points of view.

\begin{example}
Let $M = (x_{i j})$ be a generic $n \times n$ matrix, where $n \ge 2$.  The ideal $I = \text{I}_{n-1}(M)$ in $K[M]$, generated by the submaximal minors of $M$ is a Gorenstein ideal of grade four. Its graded  minimal free resolution is given by the Gulliksen-Neg\.{a}rd complex (see \cite{GN}):
\[
0 \to R (-2n)\to R^{n^2}(-n-1) \to R^{2(n^2-1)}(-n) \to R^{n^2}(-n+1) \to I \to 0.
\]
Kustin and Miller show that this resolution can be obtained by using their original construction (see \cite[Example 2.4]{KM}). Gorla \cite{Gor} studies these ideals from a liaison-theoretic point of view. Here we make the linkage steps more explicit.

If $n = 2$, then $I$ is a complete intersection. Assume $n \ge 3$, and let $N$ be the generic  $(n-1)\times (n-1)$ obtained from $M$ by deleting its last row and column. Its $(n-2)\times (n-2)$ minors generate a homogeneous Gorenstein ideal $\mfa = \text{I}_{n-2}(N)$ of grade $4$. Denote by $M_{i, j}$ the $(n-1)\times (n-1)$ minor of $M$ obtained by deleting row $i$ and column $j$. The ideal
\[
\mfb = (M_{1,n}, M_{2, n}, \cdots,M_{n-1, n}, M_{n,1}, \cdots, M_{n, n-1})
\]
is a Gorenstein ideal of grade three (see, e.g, \cite[Example 2.4]{KM}). Sylvester's identity implies that  (see, the proof of Theorem 3.1 in \cite{Gor}):
\[
N_{1, 1} \cdot I + \mfb = M_{1, 1} \cdot \mfa + \mfb.
\]
It follows that there are the following links
\[
\mfa \linkby[(\mfb, N_{1,1})] (\mfb, N_{1,1}, M_{1,1}) \linkby[(\mfb, M_{1,1})] I.
\]
Hence $I$ can be obtained from $\mfa$ by an ascending biliaison on $\mfb$ as described in Theorem~\ref{app_a_and_b}. Repeating the construction, we see that $I$ can be obtained from the complete intersection $(x_{1 1}, x_{1 2}, x_{2 1}, x_{2 2})$ by $(n-2)$ such ascending biliaisons.
\end{example}

Now we consider some Gorenstein ideals with 9 generators and 16 syzygies. Such Gorenstein ideals are investigated in depth from the point of view of unprojections  in \cite{BKR}.

\begin{example}
Let $R =K[a,b,c,d,e,f,x,y,z]$ be a polynomial ring in 9 variables over a field $K$. Consider  a generic $3\times3$ symmetric matrix $A$ and  a generic  skew-symmetric matrix $B$:
\[
A  =  \begin{bmatrix} a & b & c \\ b & d & e \\ c & e & f \end{bmatrix} \text{  and }  B = \begin{bmatrix} 0 & x & y \\ -x & 0 & z \\ -y & -z & 0 \end{bmatrix}.
\]
Then, for  $\la \neq 0$ in $K$, define a $6\times 6$ skew-symmetric matrix $N =  \begin{bmatrix} B & A \\ -A & \la B \end{bmatrix}.$ It is called
 ``extrasymmetric''  in \cite{BKR, BRS} because it is obtained from a generic skew-symmetric matrix by specializing some of the variables. The ideal $\mfa$ generated by the $4\times4$ Pfaffians of $N$ is a homogeneous Gorenstein ideal of grade $4$:
\begin{align*}
\mfa =  &(b^2-ad+\la x^2 , bc-ae+\la xy, c^2-af+\la y^2 , cd-be+\la xz, ce-bf+\la yz, \\
 & \hspace*{.2cm} e^2-df+\la z^2, cx-by+az, ex-dy+bz, fx-ey+cz).
\end{align*}
It is the defining ideal of the Segre embedding
of $\mathbb{P}^2\times \mathbb{P}^2$ into $\mathbb{P}^8$ and  a typical case of a Tom unprojection (see \cite{BKR,BRS,PR}). In particular, $\mfa$ is equal to the ideal generated by the
$2\times2$ minors of a $3\times3$ generic matrix $A + \sqrt{-\la}B$. Hence, the  Gulliksen and Neg\.{a}rd complex gives its minimal free resolution:
\[
\begin{CD} 0 @>>> R(-6) @>a_4>> R^9(-4) @>a_3>> R^{16}(-3) @>a_2>> R^9(-2)@>a_1>> \mfa @>>> 0.\end{CD}
\]
In order to perform the construction of Theorem \ref{app_a_and_b}, we choose the first three listed generators of $\mfa$ to define a complete intersection
$$\mfb = (b^2-ad+\la x^2 , bc-ae+\la xy, c^2-af+\la y^2 )$$
inside $\mfa$. Then we link as follows:
\[
\mfa \linkby[(\mfb,cd-be+\la xz)]  (\mfb, cd-be+\la xz, ax) \linkby[(\mfb, ax+(cd-be+\la xz))]  I.
\]
Explicitly, the resulting ideal $I$ is
\begin{align*}I =  & (e^2-df-cx+by+az+\la z^2, ce-bf+ay+\la yz, cd-be+ax+\la xz, \\
    & \hspace*{.2cm} c^2-af+\la y^2,  bc-ae+\la xy, ac+\la fx-\la ey+\la cz, b^2-ad+\la x^2, \\
    & \hspace*{.2cm}  ab+\la ex-\la dy+ \la bz, a^2+\la cx-\la by+\la az).
\end{align*}
It has the same Betti table as $\mfa$. In fact, $I$ is again an example of  a Tom unprojection. This time  the extrasymmetric matrix is
$$M = \begin{bmatrix} 0 & x & y & a & b & c \\
                 -x & 0 & \frac{1}{\la}a+z & b & d & e \\
                  y & -\frac{1}{\la}a-z & 0 & c & e & f \\
                 -a & -b & -c & 0 & \la x & \la y \\
                 -b & -d & -e & -\la x & 0 & a+\la z \\
                 -c & -e & -f & -\la y & -a-\la z & 0
\end{bmatrix},$$
so $I = \pf_4(M)$.
\end{example}

We conclude with an example of a Gorenstein ideal that cannot be produced using the construction of Theorem~\ref{app_a_and_b} with a strictly ascending biliaison.

\begin{example}
    \label{ex:generic 1551}
Let $I$ be a generic Artinian Gorenstein ideal in  $R = K[x_1,\ldots,x_5]$ with $h$-vector $(1, 5, 5, 1)$, where $K$ is an infinite field. It has the least possible Betti numbers. More precisely, its  graded minimal free resolution is pure and has the form
\begin{equation}
    \label{eq:1551-res}
0 \to R(-8) \to R^{10} (-6) \to R^{16} (-5) \to R^{16} (-3) \to R^{10} (-2) \to I \to 0.
\end{equation}
We claim that there are no Gorenstein ideals $\mfa$ and $\mfb$ to produce $I$ using a biliaison as in
Theorem~\ref{app_a_and_b} that is strictly ascending, i.e., $d > 0$ or, equivalently, $\mfa$ has smaller regularity than $I$.

Indeed, to see this assume such ideals $\mfa$ and $\mfb$ do exist. Since $\reg I = 3$, this forces $\reg \mfa = 1$ by Corollary~\ref{cor:necc cond}. It follows that the $h$-vector of $R/\mfa$ must be $(1, 1)$. Hence, possible after a change of coordinates, we may assume
\[
\mfa = (x_1,x_2, x_3, x_4, x_5^2).
\]
Thus, Corollary~\ref{hilbert functions} gives that $R/\mfb$ has $h$-vector $(1, 4, 1)$, that is, $\mfb$ is a Sally ideal (see Example~\ref{exa:sally ideal}). Its graded minimal free resolution has the form
\[
0 \to R(-7) \to R^6 (-5) \to R^5 (-3) \oplus R^5 (-4) \to R^6(-2) \to \mfb \to 0.
\]
By Theorem~\ref{KM construction thm}, we have the following short exact sequence of graded $R$-modules
$$\begin{CD}
  0 @>>> (\mfa/\mfb)(-1)@>>>   R/\mfb @>>> R/I @>>>  0.
 \end{CD}$$
Consider now the comparison map between the resolutions of $(\mfa/\mfb)(-1)$ and $R/\mfb$ in homological degree two. Using the notation of the proof of Theorem~\ref{KM construction thm}, it is
\[
\begin{CD}
A_3 (-1) \oplus B_2 (-1) @>[\beta_{3}, h_2 - f \id]>> B_2 = R^5 (-4) \oplus R^5 (-5)
\end{CD}
\]
Since $\deg f = 1$, the map $h_2 - f \id$ is minimal. Moreover, notice that $A_3 (-1) = R^4 (-4) \oplus R^6 (-5)$. Considering the map $\beta_3$ in degree 4,  the mapping cone procedure implies that $[\Tor_2^R (R/I, K)]_4 \neq 0$. Hence $I$ does not have a pure resolution as in \eqref{eq:1551-res}, which completes the argument.
\end{example}

%
%


\end{document}